\def\cvd{~\vbox{\hrule\hbox{%
     \vrule height1.3ex\hskip0.8ex\vrule}\hrule } }
\newtheorem{example}[theorem]{Example}
\newcommand\sgn{\operatorname{sgn}}
\newcommand\Eta{\mathrm{H}}
\newcommand\NumAdj{\mathrm{NumAdj}}
\begin{document}
\bibliographystyle{plain}

\title{Spectral Properties of Oriented Hypergraphs}
\author{Nathan Reff\thanks{Department of Mathematics, The College at Brockport: State University of New York,
Brockport, NY 14420, USA (\href{mailto:nreff@brockport.edu}{nreff@brockport.edu}).}}
%\subjclass[2010]{Primary 05C50; Secondary 05C65, 05C22}
%\keywords{Oriented hypergraph, hypergraph Laplacian, hypergraph adjacency matrix, hypergraph Laplacian eigenvalues, signless Laplacian, signed graph}

\pagestyle{myheadings}
\markboth{N. Reff}{Spectral Properties of Oriented Hypergraphs}
\maketitle
%%%%%%%%%%%%%%%%%%%%
\begin{abstract}
An oriented hypergraph is a hypergraph where each vertex-edge incidence is given a label of $+1$ or $-1$.  The adjacency and Laplacian eigenvalues of an oriented hypergraph are studied.  Eigenvalue bounds for both the adjacency and Laplacian matrices of an oriented hypergraph which depend on structural parameters of the oriented hypergraph are found.  An oriented hypergraph and its incidence dual are shown to have the same nonzero Laplacian eigenvalues.  A family of oriented hypergraphs with uniformally labeled incidences is also studied.  This family provides a hypergraphic generalization of the signless Laplacian of a graph and also suggests a natural way to define the adjacency and Laplacian matrices of a hypergraph.  Some results presented generalize both graph and signed graph results to a hypergraphic setting.
\end{abstract}
%%%%%%%%%%%%%%%%%%%%

\begin{keywords} 
Oriented hypergraph, hypergraph Laplacian, hypergraph adjacency matrix, hypergraph Laplacian eigenvalues, signless Laplacian, signed graph, hypergraph spectra 
\end{keywords}

\begin{AMS}
05C50, 05C65, 05C22
\end{AMS}

%%%%%%%%%%%%%%%%%%%%
\section{Introduction}
%%%%%%%%%%%%%%%%%%%%
There have been several approaches to studying eigenvalues of matrices associated to uniform hypergraphs \cite{MR1235565,MR1405722,MR1325271,MR2842309}.  More recently, Cooper and Dutle have developed a hypermatrix approach to studying the spectra of uniform hypergraphs \cite{MR2900714}.  Rodr\'{i}guez developed a version of the adjacency and Laplacian matrices for hypergraphs without a uniformity requirement on edge sizes \cite{MR1890984}.  The work presented here does not require uniformity either, but is focused on hypergraphs with additional structure called \emph{oriented hypergraphs}.

An oriented hypergraph is a hypergraph where each vertex-edge incidence is given a label of $+1$ or $-1$.  This incidence structure can be viewed as a generalization of an oriented signed graph \cite{MR1120422}.  In \cite{ReffRusnak1} the author and Rusnak studied several matrices associated with an oriented hypergraph.  In this paper we study the eigenvalues associated to the adjacency and Laplacian matrices of an oriented hypergraph.

The paper is organized in the following manner.  In Section \ref{BackgroundSection}, a background on oriented hypergraphs and their matrices is provided.  In Section \ref{AdjEigenvaluesOH}, the adjacency matrix is further investigated.  Vertex-switching is shown to produce cospectral oriented hypergraphs.  Also, bounds for the spectral radius and eigenvalues of the adjacency matrix of an oriented hypergraph are derived.  In Section \ref{LapEigenvaluesOHLapEigenvaluesOH}, results on the Laplacian eigenvalues of an oriented hypergraph are established.  Vertex-switching is also shown to produce Laplacian cospectral oriented hypergraphs.  Although an oriented hypergraph and its dual are not always Laplacian cospectral, they have the same nonzero Laplacian eigenvalues.  A hypergraphic generalization of the signless Laplacian for graphs is mentioned, which provides an upper bound for the Laplacian spectral radius of an oriented hypergraph.  Bounds for the Laplacian eigenvalues of an oriented hypergraph that depend on both the underlying hypergraphic structure and the adjacency signatures are found.  In Section \ref{HypergraphSpectra}, a definition for the adjacency and Laplacian matrices of a hypergraph are stated. 

A consequence of studying oriented hypergraphs is that signed and unsigned graphs as well as hypergraphs can be viewed as specializations.  An oriented signed graph is an oriented hypergraph where all edges have size 2 (a 2-uniform oriented hypergraph).  This oriented signed graph has a natural edge sign associated to it, and hence a signed graph.  An unsigned graph can be thought of as a 2-uniform oriented hypergraph where all vertex-edge incidences are labeled $+1$.  This is not the only way to think of an unsigned graph, since other orientations may be more suitable in certain situations, although this is the simplest description.  Similarly, a hypergraph can be thought of as an oriented hypergraph where all vertex-edge incidences are labeled $+1$.  %Other orientation choices may be appropriate.  %A discussion on orientation and potential definitions for the adjacency and Laplacian matrices of a hypergraph can be found in Section \ref{OrientationSection}.

%%%%%%%%%%%%%%%%%%%%
\section{Background}\label{BackgroundSection}
%%%%%%%%%%%%%%%%%%%%
%==================================================================
%==================================================================
\subsection{Oriented Hypergraphs}
%==================================================================
%==================================================================
A \emph{hypergraph} is a triple $H=(V,E,\mathcal{I})$, where $V$ is a set, $E$ is a set whose elements are subsets of $V$, and $\mathcal{I}$ is a multisubset of $V\times E$ such that if $(v,e)\in\mathcal{I}$, then $v\in e$.  Note that an edge may be empty.  The set $V$ is called the \emph{set of vertices}.  The set $E$ is called the \emph{set of edges}.  We may also write $V(H)$, $E(H)$ and $\mathcal{I}(H)$ for the set of vertices, edges and multiset of incidences of $H$, respectively.  Let $n:=|V|$ and $m:=|E|$.  If $(v,e)\in \mathcal{I}$, then $v$ and $e$ are \emph{incident}.  An \emph{incidence} is a pair $(v,e)$, where $v$ and $e$ are incident.  If $(v_i,e)$ and $(v_j,e)$ both belong to $\mathcal{I}$, then $v_i$ and $v_j$ are \emph{adjacent} vertices via the edge $e$.  The set of vertices adjacent to a vertex $v$ is denoted by $N(v)$.     

A hypergraph is \emph{simple} if for every edge $e$, and for every vertex $v\in e$, $v$ and $e$ are incident exactly once.  Unless otherwise stated, all hypergraphs in this paper are assumed to be simple.  A hypergraph is \emph{linear} if for every pair $e,f\in E$, $|e\cap f|\leq 1$.

The \emph{degree} of a vertex $v_i$, denoted by $d_i=\deg(v_i)$, is equal to the number of incidences containing $v_i$.  The \emph{maximum degree} is $\Delta:=\max_i d_i$.  The \emph{size} of an edge $e$ is the number of incidences containing $e$.   A $k$\emph{-edge} is an edge of size $k$.  A \emph{$k$-uniform hypergraph} is a hypergraph such that all of its edges have size $k$.

Given a hypergraph $H=(V(H),E(H),\mathcal{I}(H))$, there are several different substructures that can be created.  A \emph{subhypergraph} $S$ of $H$, denoted by $S=(V(S),E(S),\allowbreak\mathcal{I}(S))$, is a hypergraph with $V(S)\subseteq V(H)$, $E(S)\subseteq E(H)$ and $\mathcal{I}(S)\subseteq \mathcal{I}(H)\cap (V(S)\times E(S))$.  It is more common to define a subhypergraph as a hypergraph generated by a subset of the vertex set.  However, the definition above is more suitable for our purposes.  For a hypergraph $H=(V,E,\mathcal{I})$ with a vertex $v\in V$, the \emph{weak vertex-deletion} is the subhypergraph $H\backslash v=(V\backslash\{v\},E_v,\mathcal{I}_v)$, where 
\[E_v=\{e\cap (V\backslash\{v\}) : e\in E\},\] and 
\[\mathcal{I}_v=\mathcal{I}\cap \big((V\backslash\{v\})\times E_v\big).\]
Since edges are allowed to have size zero we do not need to add the additional condition $e\cap(V\backslash\{v\})\neq \emptyset$ to the definition of $E_v$, which is usually included in hypergraph literature.  Observe that edges incident to $v$ are not deleted in $H\backslash v$, as in the vertex-deletion of a graph.  That is why we call this type of deletion a weak vertex-deletion.  For a hypergraph $H=(V,E,\mathcal{I})$ with an edge $e\in E$, the \emph{weak edge-deletion} (or simply \emph{edge-deletion}), denoted by $H\backslash e$, is the subhypergraph $H\backslash e=(V,E\backslash \{e\},\mathcal{I}_e)$, where
\[ \mathcal{I}_e=\mathcal{I}\cap(V\times (E\backslash \{e\})).\]
The weak edge-deletion is the same as the graph version of edge-deletion.

The \emph{incidence dual} (or \emph{dual}) of a hypergraph $H=(V,E,\mathcal{I})$, denoted by $H^*$, is the hypergraph $(E,V,\mathcal{I}^*)$, where $\mathcal{I}^*:=\{(e,v):(v,e)\in\mathcal{I}\}$.  Thus, the incidence dual reverses the roles of the vertices and edges in a hypergraph.  

The set of size 2 subsets of a set $S$ is denoted by $\binom{S}{2}$.  The set of \emph{adjacencies} $\mathcal{A}$ of $H$ is defined as $\mathcal{A}:=\{(e,\{v_i,v_j\})\in E\times \binom{V}{2}: (v_i,e)\in \mathcal{I}\text{ and }(v_j,e)\in \mathcal{I}\}$.  We may also write $\mathcal{A}(H)$ for the set of adjacencies of $H$.  Observe that if $\{v_i,v_j\}\in \binom{V}{2}$, then the vertices $v_i$ and $v_j$ must be distinct.  Also, since $\mathcal{A}$ is a set there are no duplicate adjacencies.  The \emph{number of adjacencies containing vertex $v$} is denoted by $\NumAdj(v)$.  Observe that in general $d_j$, $|N(v_j)|$ and $\NumAdj(v_j)$ may all be different.  One must be careful of this fact when comparing similar graph and hypergraph bounds that will appear later in this paper.

The set of \emph{coadjacencies} $\mathcal{A}^*$ of $H$ is defined as $\mathcal{A}^*:=
\mathcal{A}(H^*)$.  We may also write $\mathcal{A}^*(H)$ for the set of coadjacencies of $H$.

An \emph{oriented hypergraph} is a pair $G=(H,\sigma)$ consisting of an \emph{underlying hypergraph} $H=(V,E,\mathcal{I})$, and an \emph{incidence orientation} $\sigma :\mathcal{I}\rightarrow\{+1,-1\}$.  Every oriented hypergraph has an associated \emph{adjacency signature} $\sgn:\mathcal{A}\rightarrow \{+1,-1\}$ defined by
\begin{equation}
\sgn(e,\{v_i,v_j\})=-\sigma (v_i,e)\sigma (v_j,e).
\end{equation}
Thus, $\sgn(e,\{v_i,v_j\})$ is called the \emph{sign} of the adjacency $(e,\{v_i,v_j\})$.  Instead of writing $\sgn(e,\{v_i,v_j\})$, the alternative notation $\sgn_{e}(v_i,v_j)$ will be used.  See Figure \ref{OHEx} for an example of an oriented hypergraph.

\begin{figure}[h!]
    \includegraphics[scale=0.6]{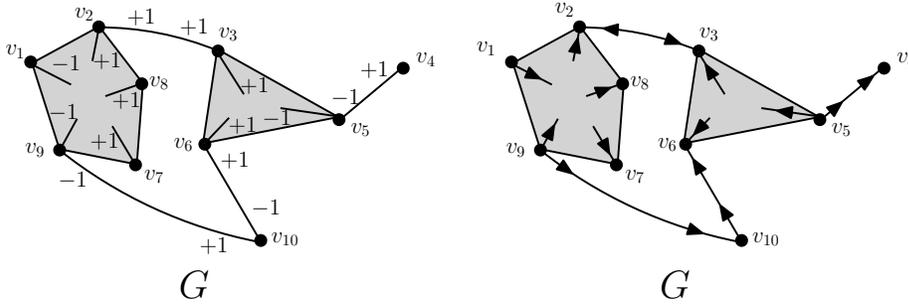}\centering
    \caption{A simple oriented hypergraph $G$ drawn in two ways.  On the left, the incidences are labeled with $\sigma$ values.  On the right, the $\sigma$ values assigned to the incidences are drawn using the arrow convention of $+1$ as an arrow going into a vertex and $-1$ as an arrow departing a vertex.}\label{OHEx}
\end{figure}

If $G=(H,\sigma)$ is an oriented hypergraph and $S=(V(S),E(S),\mathcal{I}(S))$ is a subhypergraph of $H$, then the \emph{oriented subhypergraph} $F$ of $G$ (generated by $S$) is defined by $F=(S,\sigma{\mid}_{\mathcal{I}(S)})$.  That is, the incidence orientation of $F$ is restricted to those incidences in $S$, and likewise, the adjacency signature of $F$ is restricted to those adjacencies of $S$.  The \emph{weak vertex-deletion} of $G$, denoted by $G\backslash v$, is the oriented subhypergraph $G\backslash v=(H\backslash v,\sigma{\mid}_{\mathcal{I}(H\backslash v)})$.  The \emph{weak edge-deletion} of $G$, denoted by $G\backslash e$, is the oriented subhypergraph $G\backslash e=(H\backslash e,\sigma{\mid}_{\mathcal{I}(H\backslash e)})$.

As with hypergraphs, an oriented hypergraph has an incidence dual.  The \emph{incidence dual} of an oriented hypergraph $G=(H,\sigma)$ is the oriented hypergraph $G^*=(H^*,\sigma^*)$, where the \emph{coincidence orientation} $\sigma ^{\ast }:\mathcal{I}^{\ast }\rightarrow \{+1,-1\}$ is defined by $\sigma ^{\ast }(e,v)\allowbreak=\sigma
(v,e)$, and the \emph{coadjacency signature} $\sgn^*:\mathcal{A}^*\rightarrow \{+1,-1\}$ is defined by
\begin{equation*}
\sgn^*(v,\{e_i,e_j\})=-\sigma^*(e_i,v)\sigma^*(e_j,v)=-\sigma (v,e_i)\sigma (v,e_j).
\end{equation*}

A \emph{vertex-switching function} is any function $\zeta:V\rightarrow \{-1,+1\}$.  \emph{Vertex-switching} the oriented hypergraph $G=(H,\sigma)$ means replacing $\sigma$ with $\sigma^{\zeta}$, defined by
\begin{equation}
\sigma^{\zeta}(v,e)=\zeta(v)\sigma(v,e);
\end{equation}
producing the oriented hypergraph $G^{\zeta}=(H,\sigma^{\zeta})$, with an adjacency signature $\sgn^{\zeta}$ defined by
\begin{align*}
\sgn_e^{\zeta}(v_i,v_j)&=-\sigma^{\zeta}(v_i,e)\sigma^{\zeta}(v_j,e)\\
&=-\zeta(v_i)\sigma(v_i,e)\sigma(v_j,e)\zeta(v_j)\\
&=\zeta(v_i)\sgn_e(v_i,v_j)\zeta(v_j).
\end{align*}  

We say two oriented hypergraphs $G_1$ and $G_2$ are \emph{vertex-switching equivalent}, written $G_1 \sim G_2$, when there exists a vertex-switching function $\zeta$, such that $G_2=G_1^{\zeta}$.  The equivalence class of $G$ formed under this relation is called a \emph{vertex-switching class}, and is denoted by $[G]$.

%==================================================================
%==================================================================
\subsection{Matrices and Oriented Hypergraphs}
%==================================================================
%==================================================================

Let $G$ be an oriented hypergraph.  The \emph{adjacency matrix} $A(G)=(a_{ij})\in \mathbb{R}^{n\times n}$ is defined by
\begin{equation*}
a_{ij}=
\begin{cases}
\displaystyle\sum_{e\in E}\sgn_{e}(v_{i},v_{j}) &\text{if $v_i$ is adjacent to $v_j$},\\
0 &\text{otherwise.}
\end{cases}
\end{equation*}
If $v_i$ is adjacent to $v_j$ , then
\begin{align*}
a_{ij}=\sum_{e\in E}\sgn_e(v_i,v_j)=\sum_{e\in E}\sgn_e(v_j,v_i)=a_{ji}.
\end{align*}
Therefore, $A(G)$ is symmetric.

Let $G=(H,\sigma)$ be a simple oriented hypergraph.  The \emph{incidence matrix} $\mathrm{H}(G)=(\eta _{ij})$ is the $n\times m$ matrix, with entries in $\{-1,0,+1\}$, defined by 
\begin{equation*}
\eta _{ij}=
\begin{cases} \sigma(v_{i},e_{j}) & \text{if }(v_{i},e_{j})\in \mathcal{I},\\
0 &\text{otherwise.}
\end{cases}
\end{equation*}

As with hypergraphs, the incidence matrix provides a convenient relationship between an oriented hypergraph and its incidence dual.  This is immediate by the definition of the incidence matrix and the incidence dual.

\begin{lemma}[\cite{ReffRusnak1},Theorem 4.1]\label{OHIncidenceMatrixDualTranspose}
If $G$ is an oriented hypergraph, then $\mathrm{H}(G)^{\text{T}}=\mathrm{H}(G^{\ast })$.
\end{lemma}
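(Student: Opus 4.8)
The statement to prove is Lemma \ref{OHIncidenceMatrixDualTranspose}: if $G$ is an oriented hypergraph, then $\mathrm{H}(G)^{\text{T}} = \mathrm{H}(G^{\ast})$.

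My plan is to prove this by a direct entry-by-entry comparison of the two matrices, using the definitions of the incidence matrix and of the incidence dual. First I would fix an ordering $v_1,\dots,v_n$ of the vertices of $H$ and $e_1,\dots,e_m$ of the edges of $H$; then $\mathrm{H}(G)$ is $n\times m$, so $\mathrm{H}(G)^{\text{T}}$ is $m\times n$, with $(j,i)$-entry equal to $\eta_{ij} = \mathrm{H}(G)_{ij}$. On the other side, $G^{\ast} = (H^{\ast},\sigma^{\ast})$ has underlying hypergraph $H^{\ast} = (E,V,\mathcal{I}^{\ast})$ where $\mathcal{I}^{\ast} = \{(e,v):(v,e)\in\mathcal{I}\}$, so its incidence matrix $\mathrm{H}(G^{\ast})$ is $m\times n$ — the dimensions already match. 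Using the ordering $e_1,\dots,e_m$ for the vertex set of $H^{\ast}$ and $v_1,\dots,v_n$ for its edge set, the $(j,i)$-entry of $\mathrm{H}(G^{\ast})$ is $\sigma^{\ast}(e_j,v_i)$ if $(e_j,v_i)\in\mathcal{I}^{\ast}$ and $0$ otherwise.

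The key step is then to observe two matching facts: (i) $(e_j,v_i)\in\mathcal{I}^{\ast}$ if and only if $(v_i,e_j)\in\mathcal{I}$, which is exactly the definition of $\mathcal{I}^{\ast}$, so the ``nonzero'' pattern of $\mathrm{H}(G^{\ast})$ is the transpose of that of $\mathrm{H}(G)$; and (ii) whenever the incidence is present, $\sigma^{\ast}(e_j,v_i) = \sigma(v_i,e_j)$ by the definition of the coincidence orientation. Combining (i) and (ii), the $(j,i)$-entry of $\mathrm{H}(G^{\ast})$ equals $\eta_{ij}$, which is precisely the $(j,i)$-entry of $\mathrm{H}(G)^{\text{T}}$. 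Since this holds for all $i$ and $j$, the two matrices are equal.

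I do not expect any real obstacle here — the result is essentially a restatement of the definitions, and the only thing to be careful about is bookkeeping: making sure the row/column labels of $\mathrm{H}(G^{\ast})$ are identified with the column/row labels of $\mathrm{H}(G)$ consistently (edges of $H$ become vertices of $H^{\ast}$ and vice versa), and handling the zero entries in both the ``incident'' and ``not incident'' cases so the case split is exhaustive. Since the paper already cites this as Theorem 4.1 of \cite{ReffRusnak1}, I would keep the proof to a short paragraph along the lines above, or simply note that it is immediate from the definitions of $\mathrm{H}$, $\mathcal{I}^{\ast}$, and $\sigma^{\ast}$.
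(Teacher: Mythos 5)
Your proof is correct and matches the paper's treatment: the paper states the lemma is ``immediate by the definition of the incidence matrix and the incidence dual,'' and your entry-by-entry verification using $(e_j,v_i)\in\mathcal{I}^{\ast}\iff(v_i,e_j)\in\mathcal{I}$ together with $\sigma^{\ast}(e_j,v_i)=\sigma(v_i,e_j)$ is exactly that argument spelled out.
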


The \emph{degree matrix} of an oriented hypergraph $G$ is defined as $D(G):=\text{diag}(d_1,d_2,\allowbreak\ldots,d_n)$.  The \emph{Laplacian matrix} is defined as $L(G):=D(G)-A(G).$

The Laplacian matrix of an oriented hypergraph can be written in terms of the incidence matrix.

\begin{lemma}[\cite{ReffRusnak1}, Corollary 4.4]\label{OHLapIncidenceRelation}
If $G$ is a simple oriented hypergraph, then
\begin{enumerate}
\item $L(G)=D(G)-A(G)=\mathrm{H}(G)\mathrm{H}(G)^{\text{T}}$,
\item $L(G^{\ast })=D(G^{\ast})-A(G^{\ast })=\mathrm{H}(G)^T\mathrm{H}(G).$
\end{enumerate}
\end{lemma}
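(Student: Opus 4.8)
The plan is to prove part 1 by a direct entrywise computation of the symmetric matrix $\mathrm{H}(G)\mathrm{H}(G)^{\mathrm{T}}$, and then to deduce part 2 from part 1 together with Lemma \ref{OHIncidenceMatrixDualTranspose}.

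First, write $\mathrm{H}(G)=(\eta_{ij})$ with $\eta_{ij}=\sigma(v_i,e_j)$ when $(v_i,e_j)\in\mathcal{I}$ and $\eta_{ij}=0$ otherwise. The $(i,k)$ entry of $\mathrm{H}(G)\mathrm{H}(G)^{\mathrm{T}}$ is $\sum_{j=1}^m \eta_{ij}\eta_{kj}$, and only edges $e_j$ incident with both $v_i$ and $v_k$ contribute a nonzero term. On the diagonal, $i=k$: each surviving term is $\sigma(v_i,e_j)^2=1$, one for each edge incident with $v_i$, so by simplicity (each such incidence is counted exactly once) the entry equals $d_i$; hence the diagonal part is $D(G)$. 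For $i\neq k$: each surviving term is $\sigma(v_i,e_j)\sigma(v_k,e_j)=-\sgn_{e_j}(v_i,v_k)$, summed over all edges $e_j$ containing both $v_i$ and $v_k$. If $v_i$ and $v_k$ are adjacent, this sum is $-a_{ik}$ by the definition of $A(G)$; if they are not adjacent, there are no such edges and the sum is $0=-a_{ik}$. Combining the two cases, $\mathrm{H}(G)\mathrm{H}(G)^{\mathrm{T}}=D(G)-A(G)=L(G)$, which is part 1.

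For part 2, observe first that the incidence dual $G^{\ast}$ of a simple oriented hypergraph is again a simple oriented hypergraph, since $\mathcal{I}^{\ast}=\{(e,v):(v,e)\in\mathcal{I}\}$ merely reverses each incidence pair and preserves multiplicities. By Lemma \ref{OHIncidenceMatrixDualTranspose}, $\mathrm{H}(G)^{\mathrm{T}}=\mathrm{H}(G^{\ast})$, and transposing this identity gives $\mathrm{H}(G)=\mathrm{H}(G^{\ast})^{\mathrm{T}}$. Hence $\mathrm{H}(G)^{\mathrm{T}}\mathrm{H}(G)=\mathrm{H}(G^{\ast})\mathrm{H}(G^{\ast})^{\mathrm{T}}$, and applying part 1 to the simple oriented hypergraph $G^{\ast}$ yields $\mathrm{H}(G^{\ast})\mathrm{H}(G^{\ast})^{\mathrm{T}}=D(G^{\ast})-A(G^{\ast})=L(G^{\ast})$, as claimed.

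I expect the only genuine obstacle to be notational bookkeeping rather than any real difficulty: one must carefully track that the summation defining $a_{ik}$ ranges precisely over the edges containing both $v_i$ and $v_k$, and that the hypothesis of simplicity is exactly what guarantees the incidence matrix is well defined and that no incidence is counted with a multiplicity, so the diagonal entry of $\mathrm{H}(G)\mathrm{H}(G)^{\mathrm{T}}$ is exactly $d_i$. Everything else is the standard ``$BB^{\mathrm{T}}$'' identity specialized to a $\{-1,0,+1\}$ incidence matrix.
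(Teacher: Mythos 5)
Your proof is correct and is exactly the standard argument: the paper states this lemma as an imported result from \cite{ReffRusnak1} without reproducing a proof, and the entrywise computation of $\mathrm{H}(G)\mathrm{H}(G)^{\text{T}}$ (diagonal entries giving $d_i$ by simplicity, off-diagonal entries giving $-a_{ik}$ via $\sigma(v_i,e)\sigma(v_k,e)=-\sgn_e(v_i,v_k)$), followed by applying part 1 to $G^{\ast}$ together with Lemma \ref{OHIncidenceMatrixDualTranspose}, is precisely how the cited source establishes it. Your attention to the fact that simplicity is what makes the diagonal entry equal $d_i$ and that $G^{\ast}$ is again simple is exactly the right bookkeeping.
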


Vertex-switching an oriented hypergraph $G$ can be described as matrix multiplication of the incidence matrix, and as a similarity transformation of the adjacency and Laplacian matrices.  For a vertex-switching function $\zeta:V\rightarrow\{+1,-1\}$, we define a diagonal matrix $D(\zeta):=\text{diag}(\zeta(v_1),\zeta(v_1),\ldots,\zeta(v_n))$.  The following lemma shows how to calculate the switched oriented hypergraph's incidence, adjacency and Laplacian matrices.  

\begin{lemma}[\cite{ReffRusnak1}, Propositions 3.1 and 4.3]\label{OHLAHSwitchingSimilarityTrans} Let $G$ be an oriented hypergraph.  Let $\zeta$ be a vertex-switching function on $G$.  Then 
\begin{enumerate}
\item $\Eta(G^{\zeta})=D(\zeta)\Eta(G)$,
\item $A(G^{\zeta})=D(\zeta)^{\text{T}} A(G) D(\zeta)$, and
\item $L(G^{\zeta})=D(\zeta)^{\text{T}} L(G) D(\zeta)$.
\end{enumerate}
\end{lemma}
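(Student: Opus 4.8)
The plan is to verify each identity entrywise, leaning on the computations already recorded in the text, and to observe at the outset that vertex-switching alters only the $\pm 1$ labels, not the underlying combinatorial data: $G^{\zeta}$ has the same underlying hypergraph $H$ as $G$, the same multiset of incidences $\mathcal{I}$, the same adjacency relation, and hence the same degree sequence. In particular $D(G^{\zeta})=D(G)$, and the matrices $\Eta(G^{\zeta})$, $A(G^{\zeta})$, $L(G^{\zeta})$ have the same zero/nonzero support pattern as their unswitched counterparts. I will also use throughout that $D(\zeta)$ is a diagonal $\pm1$ matrix, so $D(\zeta)=D(\zeta)^{\text{T}}=D(\zeta)^{-1}$, and that conjugating the diagonal matrix $D(G)$ by $D(\zeta)$ returns $D(G)$.

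For part (1), fix $i,j$. If $(v_i,e_j)\notin\mathcal{I}$ then the $(i,j)$ entry of $\Eta(G^{\zeta})$ is $0$, and the $(i,j)$ entry of $D(\zeta)\Eta(G)$ is $\zeta(v_i)\cdot 0=0$. If $(v_i,e_j)\in\mathcal{I}$, then the $(i,j)$ entry of $\Eta(G^{\zeta})$ is $\sigma^{\zeta}(v_i,e_j)=\zeta(v_i)\sigma(v_i,e_j)$ by the definition of $\sigma^{\zeta}$, while the $(i,j)$ entry of $D(\zeta)\Eta(G)$ is $\zeta(v_i)\eta_{ij}=\zeta(v_i)\sigma(v_i,e_j)$. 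The two agree, giving $\Eta(G^{\zeta})=D(\zeta)\Eta(G)$.

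For part (2), fix $i\neq j$; the diagonal entries of $A$ are zero on both sides. If $v_i$ is not adjacent to $v_j$ this stays true after switching (same adjacency relation), so both sides have $(i,j)$ entry $0$. If $v_i$ is adjacent to $v_j$, then using the adjacency-signature switching identity $\sgn_e^{\zeta}(v_i,v_j)=\zeta(v_i)\sgn_e(v_i,v_j)\zeta(v_j)$ already derived above, the $(i,j)$ entry of $A(G^{\zeta})$ is $\sum_{e\in E}\sgn_e^{\zeta}(v_i,v_j)=\zeta(v_i)\Big(\sum_{e\in E}\sgn_e(v_i,v_j)\Big)\zeta(v_j)=\zeta(v_i)\,a_{ij}\,\zeta(v_j)$, which is precisely the $(i,j)$ entry of $D(\zeta)^{\text{T}}A(G)D(\zeta)$. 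Hence $A(G^{\zeta})=D(\zeta)^{\text{T}}A(G)D(\zeta)$.

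For part (3), I will combine part (1) with Lemma \ref{OHLapIncidenceRelation}, applied to the simple oriented hypergraph $G^{\zeta}$:
\[
L(G^{\zeta})=\Eta(G^{\zeta})\Eta(G^{\zeta})^{\text{T}}=\big(D(\zeta)\Eta(G)\big)\big(D(\zeta)\Eta(G)\big)^{\text{T}}=D(\zeta)\,\Eta(G)\Eta(G)^{\text{T}}\,D(\zeta)^{\text{T}}=D(\zeta)^{\text{T}}L(G)D(\zeta),
\]
using $D(\zeta)=D(\zeta)^{\text{T}}$ in the last step. (Alternatively one can write $L(G^{\zeta})=D(G^{\zeta})-A(G^{\zeta})=D(G)-D(\zeta)^{\text{T}}A(G)D(\zeta)=D(\zeta)^{\text{T}}\big(D(G)-A(G)\big)D(\zeta)$ via part (2) and $D(G^{\zeta})=D(G)$.) There is no real obstacle here; the only point requiring care is the bookkeeping that switching leaves the underlying hypergraph, the incidence relation, and the degrees untouched, so that $D(G^{\zeta})=D(G)$ and the support patterns match — once that is noted, each identity reduces to a one-line entrywise check or a one-line matrix manipulation.
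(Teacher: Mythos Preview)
Your argument is correct: each of the three identities reduces to a one-line entrywise check once you observe that switching preserves the underlying hypergraph (hence the incidence relation, the adjacency relation, and the degrees), and your use of the already-derived identity $\sgn_e^{\zeta}(v_i,v_j)=\zeta(v_i)\sgn_e(v_i,v_j)\zeta(v_j)$ for part~(2) and of Lemma~\ref{OHLapIncidenceRelation} for part~(3) is exactly right.

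There is nothing to compare against here: the paper does not supply its own proof of this lemma but instead cites it from \cite{ReffRusnak1} (Propositions~3.1 and~4.3). Your direct verification is the natural approach and is presumably what appears in that reference as well.
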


%==================================================================
%==================================================================
\subsection{Matrix Analysis}
%==================================================================
%==================================================================

Since the eigenvalues of any symmetric matrix $A\in\mathbb{R}^{n\times n}$ are real we will assume that they are labeled and ordered according to the following convention:
\[ \lambda_n(A) \leq \lambda_{n-1}(A) \leq \cdots \leq \lambda_2(A) \leq \lambda_1(A).\]

If $A\in \mathbb{R}^{n\times n}$ is symmetric, then the quadratic form $\mathbf{x}^{\text{T}}A\mathbf{x}$, for some $\mathbf{x}\in\mathbb{R}^n\backslash\{\mathbf{0}\}$, can be use to calculate the eigenvalues of $A$ using the following theorem. In particular, we can calculate the smallest and largest eigenvalues using the following, usually called the Rayleigh-Ritz Theorem.

\begin{lemma}[\cite{MR1084815},Theorem 4.2.2]\label{RRThm} Let $A\in\mathbb{R}^{n\times n}$ be symmetric.  Then
\begin{align*}
\lambda_1(A)&=\max_{\mathbf{x}\in \mathbb{R}^n \backslash \{\mathbf{0}\}} \frac{\mathbf{x}^{\text{T}}A\mathbf{x}}{\mathbf{x}^{\text{T}}\mathbf{x}} = \max_{\mathbf{x}^{\text{T}}\mathbf{x}=1} \mathbf{x}^{\text{T}}A\mathbf{x},\\
\lambda_n(A)&=\min_{\mathbf{x}\in \mathbb{R}^n \backslash \{\mathbf{0}\}} \frac{\mathbf{x}^{\text{T}}A\mathbf{x}}{\mathbf{x}^{\text{T}}\mathbf{x}} =\min_{\mathbf{x}^{\text{T}}\mathbf{x}=1} \mathbf{x}^{\text{T}}A\mathbf{x}.
\end{align*}
\end{lemma}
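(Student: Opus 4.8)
The plan is to prove the Rayleigh--Ritz characterization by reducing to the diagonal case via the spectral theorem. First I would invoke the spectral theorem for real symmetric matrices to write $A=U\Lambda U^{\text{T}}$, where $U\in\mathbb{R}^{n\times n}$ is orthogonal and $\Lambda=\operatorname{diag}(\lambda_1(A),\dots,\lambda_n(A))$. Since $U$ is orthogonal, the substitution $\mathbf{y}=U^{\text{T}}\mathbf{x}$ is a bijection of $\mathbb{R}^n$ onto itself that preserves the Euclidean norm, so $\mathbf{x}^{\text{T}}\mathbf{x}=\mathbf{y}^{\text{T}}\mathbf{y}$ and $\mathbf{x}^{\text{T}}A\mathbf{x}=\mathbf{y}^{\text{T}}\Lambda\mathbf{y}=\sum_{i=1}^n\lambda_i(A)y_i^2$. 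Hence the Rayleigh quotient $\mathbf{x}^{\text{T}}A\mathbf{x}/\mathbf{x}^{\text{T}}\mathbf{x}$ equals $\big(\sum_i\lambda_i(A)y_i^2\big)/\big(\sum_i y_i^2\big)$, a convex combination of the eigenvalues of $A$ with weights $y_i^2/\sum_j y_j^2$.

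The next step is the two-sided bound. Because each weight is nonnegative and the weights sum to $1$, this convex combination lies between $\min_i\lambda_i(A)=\lambda_n(A)$ and $\max_i\lambda_i(A)=\lambda_1(A)$; that is, $\lambda_n(A)\le \mathbf{x}^{\text{T}}A\mathbf{x}/\mathbf{x}^{\text{T}}\mathbf{x}\le \lambda_1(A)$ for every $\mathbf{x}\ne\mathbf{0}$. For the reverse inequalities I would exhibit extremizers: taking $\mathbf{x}$ to be the column of $U$ corresponding to $\lambda_1(A)$ (equivalently $\mathbf{y}=\mathbf{e}_1$) makes the quotient equal to $\lambda_1(A)$, and the column corresponding to $\lambda_n(A)$ attains $\lambda_n(A)$. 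This gives the first equality in each displayed line.

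Finally, to pass from the supremum/infimum over $\mathbb{R}^n\setminus\{\mathbf{0}\}$ to a maximum/minimum over the unit sphere, I would observe that the Rayleigh quotient is invariant under scaling $\mathbf{x}\mapsto t\mathbf{x}$ for $t\ne 0$, so the supremum over all nonzero vectors equals the supremum of $\mathbf{x}^{\text{T}}A\mathbf{x}$ over $\{\mathbf{x}:\mathbf{x}^{\text{T}}\mathbf{x}=1\}$; since $\mathbf{x}\mapsto\mathbf{x}^{\text{T}}A\mathbf{x}$ is continuous and the unit sphere in $\mathbb{R}^n$ is compact, this supremum is attained, which justifies writing $\max$ in place of $\sup$ (and dually $\min$ in place of $\inf$).

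I do not expect a genuine obstacle here: the argument is entirely standard once the spectral theorem is available, and indeed the statement is quoted from \cite{MR1084815}. The only point requiring a little care is verifying that the extrema are actually attained (hence $\max$ rather than $\sup$), which the compactness remark handles; everything else is the bookkeeping of diagonalizing $A$ and recognizing the resulting expression as a convex combination of the eigenvalues.
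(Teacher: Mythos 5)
Your proof is correct and is the standard argument: diagonalize $A$ by an orthogonal matrix, recognize the Rayleigh quotient as a convex combination of the eigenvalues, exhibit eigenvectors as extremizers, and use scaling invariance plus compactness of the unit sphere to justify writing $\max$ and $\min$. Note, however, that the paper offers no proof of this statement at all --- it is quoted as background from Horn and Johnson (Theorem 4.2.2) --- so there is nothing in the paper to compare your argument against; your write-up simply supplies the standard proof that the cited reference contains.
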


An $r\times r$ \emph{principle submatrix} of $A\in \mathbb{R}^{n\times n}$, denoted by $A_r$, is a matrix obtained by deleting $n-r$ rows and the corresponding columns of $A$.  The next lemma is sometimes called the the Cauchy Interlacing Theorem, or the inclusion principle.

\begin{lemma}[\cite{MR1084815}, Theorem 4.3.15]\label{interlacinglemma} 
Let $A\in\mathbb{R}^{n\times n}$ be symmetric and $r\in \{1,\ldots,n\}$.  Then for all $k\in \{1,\ldots,r\}$, 
\[\lambda_{k+n-r}(A) \leq \lambda_{k}(A_r) \leq \lambda_{k}(A).\]
\end{lemma}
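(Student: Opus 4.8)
The statement to be proved is the Cauchy Interlacing Theorem. The plan is to deduce it from a min--max description of the eigenvalues; since only the Rayleigh--Ritz characterization (Lemma~\ref{RRThm}) is available in the excerpt, I would give a self-contained argument built from spectral decompositions and Rayleigh quotients rather than quoting the full Courant--Fischer theorem.

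First I would reduce to the case that $A_r$ is the leading $r\times r$ principal submatrix: any other choice of which $n-r$ rows and columns to delete differs from this one by a permutation similarity, which changes neither $\operatorname{spec}(A)$ nor $\operatorname{spec}(A_r)$. With this normalization, let $P\in\mathbb{R}^{n\times r}$ have columns $e_1,\dots,e_r$ (the first $r$ standard basis vectors of $\mathbb{R}^n$); then $P^{\mathrm T}P=I_r$ and $A_r=P^{\mathrm T}AP$, so for every nonzero $y\in\mathbb{R}^r$ the vector $x=Py$ is nonzero and satisfies $x^{\mathrm T}x=y^{\mathrm T}y$ and $x^{\mathrm T}Ax=y^{\mathrm T}A_ry$. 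Next I would fix orthonormal eigenbases $u_1,\dots,u_n$ of $A$ and $w_1,\dots,w_r$ of $A_r$, with eigenvalues ordered decreasingly as in the paper's convention, and record the elementary fact obtained by expanding a vector in such a basis: on $\operatorname{span}(u_j,\dots,u_n)$ one has $x^{\mathrm T}Ax\le\lambda_j(A)\,x^{\mathrm T}x$, and on $\operatorname{span}(u_1,\dots,u_j)$ one has $x^{\mathrm T}Ax\ge\lambda_j(A)\,x^{\mathrm T}x$, with the analogous statements for $A_r$ and the $w_i$.

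For the upper bound $\lambda_k(A_r)\le\lambda_k(A)$, consider the two subspaces $U_1=\operatorname{span}(Pw_1,\dots,Pw_k)$ and $U_2=\operatorname{span}(u_k,\dots,u_n)$ of $\mathbb{R}^n$, of dimensions $k$ and $n-k+1$. Their dimensions sum to $n+1>n$, so they share a nonzero vector $x$; membership in $U_2$ gives $x^{\mathrm T}Ax\le\lambda_k(A)\,x^{\mathrm T}x$, while writing $x=Py$ with $y\in\operatorname{span}(w_1,\dots,w_k)$ and transferring through $P$ gives $x^{\mathrm T}Ax=y^{\mathrm T}A_ry\ge\lambda_k(A_r)\,x^{\mathrm T}x$. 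For the lower bound $\lambda_{k+n-r}(A)\le\lambda_k(A_r)$, I would run the symmetric argument with $W_1=\operatorname{span}(Pw_k,\dots,Pw_r)$ and $W_2=\operatorname{span}(u_1,\dots,u_{k+n-r})$, of dimensions $r-k+1$ and $k+n-r$; these again sum to $n+1$, so a common nonzero vector $x$ satisfies $x^{\mathrm T}Ax\ge\lambda_{k+n-r}(A)\,x^{\mathrm T}x$ from $W_2$ and $x^{\mathrm T}Ax\le\lambda_k(A_r)\,x^{\mathrm T}x$ from $W_1$, and combining the two finishes the proof.

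I do not expect a serious obstacle here: the whole argument is routine once the right pair of subspaces is chosen. The only genuine ingredient is the dimension count forcing a nonzero common vector---essentially a repackaging of the min--max theorem---and the point requiring care is the bookkeeping: keeping the decreasing-order convention consistent across the two inequalities and noticing that the shifted index $k+n-r$ is exactly the one that makes $\dim W_1+\dim W_2=n+1$.
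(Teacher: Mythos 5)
Your proof is correct. Note first that the paper does not prove this lemma at all: it is imported verbatim from Horn and Johnson (Theorem 4.3.15), so there is no in-paper argument to compare against. What you have written is the standard self-contained proof of Cauchy interlacing: reduce to the leading principal submatrix by permutation similarity, realize $A_r$ as $P^{\mathrm T}AP$ with $P^{\mathrm T}P=I_r$, and for each of the two inequalities choose a pair of subspaces whose dimensions sum to $n+1$ so that a common nonzero vector carries both Rayleigh-quotient bounds. The dimension bookkeeping checks out: $\dim U_1=k$ (since $P$ is injective on $\mathbb{R}^r$ and the $w_i$ are independent) and $\dim U_2=n-k+1$ for the upper bound, and $\dim W_1=r-k+1$, $\dim W_2=k+n-r$ for the lower bound, with $1\le k+n-r\le n$ guaranteed by $1\le k\le r\le n$. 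Horn and Johnson's own proof routes through the Courant--Fischer min--max theorem; your version simply inlines that theorem as an explicit subspace-intersection argument, which buys self-containedness at the cost of a little more setup. Either route is fine, and nothing in your argument fails.
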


The \emph{spectral radius} of a matrix $A=(a_{ij})\in \mathbb{C}^{n\times n}$ is defined as $\rho(A):=\max\{|\lambda_i| : \lambda_i \text{ is an eigenvalue of } A \}$.  

The multiset of all eigenvalues of $A\in \mathbb{R}^{n \times n}$, denoted by $\sigma(A)$, is called the \emph{spectrum} of $A$.  The next lemma is often called the Ger\v{s}gorin disc theorem.
\begin{lemma}[\cite{MR1084815}, Theorem 6.1.1]\label{GDThm}  Suppose $A=(a_{ij})\in \mathbb{R}^{n\times n}$.  Then
\[ \sigma(A)\subseteq \bigcup_{i=1}^n \Big\{z\in\mathbb{C} : |z-a_{ii}|\leq \sum_{\substack{j=1\\j\neq i}}^n |a_{ij}|\Big\}.\]
\end{lemma}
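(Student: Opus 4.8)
The plan is to prove this directly from the eigenvalue--eigenvector equation, the only ingredient being a careful choice of index. Let $\lambda\in\sigma(A)$ and let $\mathbf{x}=(x_1,\dots,x_n)^{\text{T}}\in\mathbb{C}^n\setminus\{\mathbf{0}\}$ be an associated eigenvector, so that $A\mathbf{x}=\lambda\mathbf{x}$. First I would choose an index $k$ with $|x_k|=\max_{1\le i\le n}|x_i|$; since $\mathbf{x}\neq\mathbf{0}$ this maximum is strictly positive, so $|x_k|>0$.

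Next I would extract the $k$-th coordinate of the identity $A\mathbf{x}=\lambda\mathbf{x}$, namely $\sum_{j=1}^n a_{kj}x_j=\lambda x_k$, and rearrange it to isolate the diagonal contribution:
\[
(\lambda-a_{kk})\,x_k=\sum_{\substack{j=1\\ j\neq k}}^n a_{kj}x_j.
\]
Taking moduli and applying the triangle inequality gives $|\lambda-a_{kk}|\,|x_k|\le\sum_{j\neq k}|a_{kj}|\,|x_j|$.

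The key step---and essentially the only place any thought is needed---is to invoke the maximality of $|x_k|$: since $|x_j|\le|x_k|$ for every $j$, the right-hand side is bounded above by $\big(\sum_{j\neq k}|a_{kj}|\big)|x_k|$. Dividing through by $|x_k|>0$ yields
\[
|\lambda-a_{kk}|\le\sum_{\substack{j=1\\ j\neq k}}^n|a_{kj}|,
\]
which says exactly that $\lambda$ lies in the $k$-th disc, hence in the union over all $i$. As $\lambda$ was an arbitrary eigenvalue, this establishes $\sigma(A)\subseteq\bigcup_{i=1}^n\{z\in\mathbb{C}:|z-a_{ii}|\le\sum_{j\neq i}|a_{ij}|\}$.

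There is no real obstacle here; the proof is elementary. The one subtlety worth flagging is that the argument pins $\lambda$ to the disc indexed by the row where the eigenvector attains its maximum modulus, not to any prescribed row---so one obtains only that every eigenvalue lies in \emph{some} disc, not that every disc contains an eigenvalue. I would also note that, although the lemma is stated for real matrices, the eigenvector and eigenvalue may be complex, and the argument above works verbatim over $\mathbb{C}$, which is why the discs are drawn in the complex plane.
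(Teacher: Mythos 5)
Your proof is correct and is precisely the standard argument for the Ger\v{s}gorin disc theorem; the paper does not prove this lemma but cites it from Horn and Johnson (Theorem 6.1.1), whose proof is the same row-of-maximum-modulus argument you give. Your closing remarks (the eigenvalue is only located in \emph{some} disc, and the argument must be run over $\mathbb{C}$ even for real $A$) are both accurate and worth flagging.
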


%%%%%%%%%%%%%%%%%%%%
%%%%%%%%%%%%%%%%%%%%
\section{Adjacency Eigenvalues}\label{AdjEigenvaluesOH}
%%%%%%%%%%%%%%%%%%%%
%%%%%%%%%%%%%%%%%%%%

As mentioned before, if $G$ is an oriented hypergraph, then its adjacency matrix $A(G)$ is symmetric.  Therefore, $A(G)$ has real eigenvalues.  In this section we study the adjacency eigenvalues in relation to the structure of $G$.

The next lemma implies that a vertex-switching class has a single adjacency spectrum.  This is immediate from Lemma \ref{OHLAHSwitchingSimilarityTrans}.  Two oriented hypergraphs $G_1$ and $G_2$ are \emph{cospectral} if the adjacency matrices $A(G_1)$ and $A(G_2)$ have the same spectrum.  The following lemma also states that vertex-switching is a method for producing cospectral oriented hypergraphs.

\begin{lemma}\label{OHAdjEigenvaluesSWITCHIN} Let $G_1$ and $G_2$ both be oriented hypergraphs.  If $G_1\sim G_2$, then $G_1$ and $G_2$ are cospectral.
\end{lemma}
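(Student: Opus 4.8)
The plan is to reduce the statement to a similarity of matrices and then invoke the fact that similar matrices share a spectrum. By hypothesis $G_1 \sim G_2$, so there is a vertex-switching function $\zeta \colon V \to \{+1,-1\}$ with $G_2 = G_1^{\zeta}$. Applying part (2) of Lemma \ref{OHLAHSwitchingSimilarityTrans} gives
\[
A(G_2) = A(G_1^{\zeta}) = D(\zeta)^{\text{T}} A(G_1) D(\zeta).
\]
First I would observe that $D(\zeta)$ is a diagonal matrix with $\pm 1$ entries, hence $D(\zeta)^{\text{T}} = D(\zeta)$ and $D(\zeta)^2 = I$, so $D(\zeta)^{-1} = D(\zeta)^{\text{T}}$. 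Therefore the displayed identity says precisely that $A(G_2) = D(\zeta)^{-1} A(G_1) D(\zeta)$, i.e. $A(G_1)$ and $A(G_2)$ are similar.

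Next I would conclude that similar matrices have the same characteristic polynomial, and therefore the same eigenvalues with the same multiplicities; that is, $\sigma(A(G_1)) = \sigma(A(G_2))$. By definition this means $G_1$ and $G_2$ are cospectral, which completes the proof.

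There is essentially no obstacle here: the content is entirely packaged in Lemma \ref{OHLAHSwitchingSimilarityTrans}, and the only thing to check is the elementary observation that a diagonal $\pm 1$ matrix is its own inverse and transpose, so the conjugation in that lemma is genuinely a similarity transformation. The one point worth stating carefully is that one should not merely cite "Lemma \ref{OHLAHSwitchingSimilarityTrans}" but also note that cospectrality is an equivalence relation, so it suffices to treat the case $G_2 = G_1^{\zeta}$ rather than a longer chain of switchings — though in fact a single $\zeta$ already realizes the full switching class, so even this remark is optional.
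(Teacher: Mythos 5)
Your proof is correct and follows exactly the route the paper intends: the paper simply declares the lemma ``immediate from Lemma \ref{OHLAHSwitchingSimilarityTrans},'' and you have spelled out the only detail worth checking, namely that $D(\zeta)$ is a diagonal $\pm 1$ matrix and hence its own inverse and transpose, so the conjugation is a genuine similarity preserving the spectrum. No gaps.
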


The spectral radius of the adjacency matrix of an oriented hypergraph $G$ is related to the number of adjacencies in $G$.  This is a generalization of a similar result known for graphs \cite[Proposition 1.1.1]{MR2571608}.
\begin{theorem}\label{OHSpecRadiusandMaxDegree}  Let $G$ be an oriented hypergraph.  Then
\[ \rho(A(G)) \leq \max_i \NumAdj(v_i).\]
\end{theorem}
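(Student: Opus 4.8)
The plan is to bound the spectral radius via the Rayleigh--Ritz characterization of the largest eigenvalue together with a Gershgorin-type row-sum estimate, but the subtlety is that the diagonal entries $a_{ii}=0$ and the off-diagonal entries $a_{ij}$ can be large in absolute value (they are sums of signs over multiple edges), so a naive row-sum bound $\sum_j |a_{ij}|$ does not obviously give $\NumAdj(v_i)$. First I would observe that since $A(G)$ is real symmetric, $\rho(A(G))=\max\{\lambda_1(A(G)),-\lambda_n(A(G))\}$, and more to the point, for any real symmetric $A$ one has $\rho(A)\le \max_i \sum_{j=1}^n |a_{ij}|$; this is just Lemma~\ref{GDThm} applied to $A(G)$, whose diagonal is zero. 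So it suffices to show that for each $i$,
\[
\sum_{j=1}^n |a_{ij}| \;=\; \sum_{j\,:\,v_j\in N(v_i)} \Bigl|\sum_{e\in E}\sgn_e(v_i,v_j)\Bigr| \;\le\; \NumAdj(v_i).
\]

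The key step is the estimate $\bigl|\sum_{e\in E}\sgn_e(v_i,v_j)\bigr| \le \#\{e\in E : (v_i,e)\in\mathcal I \text{ and } (v_j,e)\in\mathcal I\}$, which is just the triangle inequality: each term $\sgn_e(v_i,v_j)$ is $\pm 1$ and contributes to the sum only when $v_i$ and $v_j$ are adjacent via $e$, i.e. when $(e,\{v_i,v_j\})\in\mathcal A$. Summing this over all $v_j\in N(v_i)$ gives
\[
\sum_{j} \Bigl|\sum_{e\in E}\sgn_e(v_i,v_j)\Bigr| \;\le\; \sum_{j}\#\{e : (e,\{v_i,v_j\})\in\mathcal A\} \;=\; \#\{(e,\{v_i,v_j\})\in\mathcal A : v_j\in V\} \;=\; \NumAdj(v_i),
\]
since the adjacencies containing $v_i$ are partitioned according to the other vertex $v_j$ in the pair, and each such adjacency is counted exactly once (recall $\mathcal A$ is a set, so there are no repeats, and in each pair $\{v_i,v_j\}$ the two vertices are distinct). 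Combining with the Gershgorin bound yields $\rho(A(G))\le \max_i \NumAdj(v_i)$.

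The main obstacle, such as it is, is purely bookkeeping: making sure the double-counting argument matching $\sum_j \#\{e:(e,\{v_i,v_j\})\in\mathcal A\}$ with $\NumAdj(v_i)$ is airtight, in particular that the map sending an adjacency $(e,\{v_i,v_j\})$ containing $v_i$ to the pair $(e,v_j)$ is a bijection onto the relevant index set, and that the paper's conventions ($\mathcal A$ a set, $v_i\neq v_j$ in any pair of $\binom{V}{2}$) are being used correctly. One should also remark that this proof does not use Lemma~\ref{RRThm} at all — Gershgorin suffices — though an alternative route through $\lambda_1(A(G)) = \max_{\|\mathbf x\|=1}\mathbf x^{\mathrm T}A(G)\mathbf x$ and then estimating the quadratic form would also work and might be preferred for uniformity with later proofs in the section.
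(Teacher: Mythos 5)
Your proof is correct and is essentially the same as the paper's: both bound $\rho(A(G))$ by the maximum absolute row sum of $A(G)$ and then count that row sum by $\max_i \NumAdj(v_i)$ exactly as you do. The only cosmetic difference is that you invoke Lemma~\ref{GDThm} directly, whereas the paper unfolds the equivalent maximal-eigenvector-entry argument by hand.
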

\begin{proof}
The proof is inspired by the version for graphs \cite[Proposition 1.1.1]{MR2571608}.  Let $\mathbf{x}=(x_1,\ldots,x_n)\in \mathbb{R}^n$ be an eigenvector of $A(G)$ with associated eigenvalue $\lambda$.  The $i^\text{th}$ entry in the equation $A(G)\mathbf{x}=\lambda\mathbf{x}$ is
\[ \lambda x_i = \sum_{v_j\in V} \sum_{\substack{e\in E\\ v_i,v_j\in e}} \sgn_e(v_i,v_j) x_j .\]
Let $|x_m|=\max_{k}|x_k|\neq 0$.  Then,
\[ |\lambda||x_m| \leq  \sum_{v_j\in V} \sum_{\substack{e\in E\\ v_m,v_j\in e}} |x_j| \leq \max_i \NumAdj(v_i) |x_m| .\]
The result follows.
\end{proof}

The \emph{number of positive adjacencies containing $v_j$} is $\NumAdj^{+}(v_j):=|\{(e,\{v_j,\allowbreak v_k\})\in\mathcal{A}: \sgn_e(v_j,v_k)=+1\}|$.  The \emph{number of negative adjacencies containing $v_j$} is $\NumAdj^{-}(v_j):=|\{(e,\{v_j,v_k\})\in\mathcal{A}: \sgn_e(v_j,v_k)=-1\}|$.  Notice that $\NumAdj(v_j)=\NumAdj^{+}(v_j)+\NumAdj^{-}(v_j)$.  If $G$ is a simple linear 2-uniform oriented hypergraph, then $\NumAdj(v_j)=\NumAdj^{+}(v_j)+\NumAdj^{-}(v_j)$ is the same as $d_j= d_j^{+}+d_j^-$ (where $d_j^{+}$ and $d_j^{-}$ are the number of positive and negative edges incident to $v_j$), as known for signed graphs.  The \emph{net number of adjacencies containing $v_j$} is $\NumAdj^{\pm}(v_j):=\NumAdj^{+}(v_j)-\NumAdj^{-}(v_j)$.

The following adjacency eigenvalue bounds depend on the adjacency signs of an oriented hypergraph $G=(H,\sigma)$.  This is particularly interesting since the inequalities are not solely determined by the underlying hypergraph $H$.  Inequality \eqref{OHNeqAB1gain} is a generalization of a lower bound for the largest adjacency eigenvalue of an unsigned graph attributed to Collatz and Sinogowitz \cite{MR0087952}.

\begin{theorem}\label{OHAdjacencyBounds1} Let $G=(H,\sigma)$ be an oriented hypergraph.  Then
\begin{equation}\label{OHNeqAB1gain}
\lambda_{n}(A(G)) \leq \frac{1}{n} \sum_{j=1}^n \NumAdj^{\pm}(v_j)  \leq \lambda_{1}(A(G)).
\end{equation}
\end{theorem}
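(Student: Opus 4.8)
The plan is to apply the Rayleigh--Ritz Theorem (Lemma \ref{RRThm}) to the symmetric matrix $A(G)$ using the single test vector $\mathbf{x}=\mathbf{1}=(1,1,\ldots,1)^{\text{T}}\in\mathbb{R}^n$. Since $\mathbf{1}\neq\mathbf{0}$, Lemma \ref{RRThm} immediately gives
\[
\lambda_n(A(G)) \;\leq\; \frac{\mathbf{1}^{\text{T}}A(G)\mathbf{1}}{\mathbf{1}^{\text{T}}\mathbf{1}} \;\leq\; \lambda_1(A(G)),
\]
and $\mathbf{1}^{\text{T}}\mathbf{1}=n$, so the entire task reduces to the combinatorial identity $\mathbf{1}^{\text{T}}A(G)\mathbf{1}=\sum_{j=1}^n\NumAdj^{\pm}(v_j)$.

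To establish that identity, I would expand $\mathbf{1}^{\text{T}}A(G)\mathbf{1}=\sum_{i=1}^n\sum_{j=1}^n a_{ij}$ as the sum of the row sums of $A(G)$. Fix a vertex $v_i$. By the definition of the adjacency matrix, $a_{ij}=\sum_{e\in E}\sgn_e(v_i,v_j)$ whenever $v_i$ is adjacent to $v_j$, and $a_{ij}=0$ otherwise (and when $v_i$ is not adjacent to $v_j$ there are no adjacencies $(e,\{v_i,v_j\})$ to account for). Hence the $i$th row sum is
\[
\sum_{j=1}^n a_{ij} \;=\; \sum_{\substack{j:\,v_j\in N(v_i)}}\;\sum_{\substack{e\in E\\ v_i,v_j\in e}}\sgn_e(v_i,v_j) \;=\; \sum_{(e,\{v_i,v_k\})\in\mathcal{A}}\sgn_e(v_i,v_k),
\]
where the last equality just reorganizes the double sum as a single sum over all adjacencies of $G$ containing $v_i$. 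Each such adjacency contributes $+1$ if it is positive and $-1$ if it is negative, so the row sum equals $\NumAdj^{+}(v_i)-\NumAdj^{-}(v_i)=\NumAdj^{\pm}(v_i)$.

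Summing over $i$ yields $\mathbf{1}^{\text{T}}A(G)\mathbf{1}=\sum_{i=1}^n\NumAdj^{\pm}(v_i)$, and dividing by $n$ gives the middle quantity in \eqref{OHNeqAB1gain}, completing the proof. There is no real obstacle here; the only point requiring care is the bookkeeping in the second step, namely checking that the double summation over neighbors $v_j$ and edges $e$ containing both $v_i$ and $v_j$ enumerates each adjacency containing $v_i$ exactly once (which holds because $\mathcal{A}$ is a set, so there are no repeated adjacencies, and the zero entries contribute nothing).
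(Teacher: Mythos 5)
Your proposal is correct and follows essentially the same route as the paper: both apply the Rayleigh--Ritz theorem with the all-ones test vector and reduce the claim to the identity $\mathbf{1}^{\text{T}}A(G)\mathbf{1}=\sum_{j=1}^n\NumAdj^{\pm}(v_j)$, which the paper obtains by computing $A(G)\mathbf{1}$ entrywise exactly as in your row-sum calculation. Your version simply spells out the bookkeeping over adjacencies in more detail.
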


{\em Proof.}   The proof method is similar to \cite[Theorem 3.2.1]{MR2571608} and \cite[Theorem 8.1.25]{MR2571608}.
For brevity, we write $A$ for $A(G)$. Let $\mathbf{j}:=(1,\ldots,1)\in\mathbb{R}^n$.  Let $M_k={\bf j}^{\text{T}} A^k {\bf j}$.  From Lemma \ref{RRThm} the following is clear: 
\[ (\lambda_{n}(A))^k \leq M_k/\mathbf{j}^{\text{T}}\mathbf{j} \leq (\lambda_{1}(A))^k.  \]
We will use the equation:
\begin{align}
A{\bf j}&= \Big(\sum_{j=1}^n \sum_{e\in E}\sgn_e(v_1,v_j),\ldots,\sum_{j=1}^n \sum_{e\in E}\sgn_e(v_n,v_j)\Big)\notag\\
&=(\NumAdj^{\pm}(v_1),\ldots, \NumAdj^{\pm}(v_n)).\label{OHnetdegeq}
\end{align}
We compute $M_1$; thus, making inequality \eqref{OHNeqAB1gain} true.
\[ M_1 = {\bf j}^{\text{T}} A {\bf j}={\bf j}^{\text{T}}(\NumAdj^{\pm}(v_1),\ldots, \NumAdj^{\pm}(v_n)) = \sum_{j=1}^n \NumAdj^{\pm}(v_j).\cvd\]

Better bounds can be found by computing $M_k$ for larger $k$ values, as was done for graphs with $k=2$ by Hoffman \cite{MR0140441}.  

The adjacency eigenvalues of an oriented hypergraph $G$ bound the adjacency eigenvalues of the weak vertex-deletion $G\backslash v$.  This result is in some sense a generalization of similar bounds known for the adjacency eigenvlues of a graph $G$ and the adjacency eigenvalues of the vertex-deletion $G\backslash v$ \cite[Theorem 1.2.6]{liu2000matrices}.  However, as explained in the background section, the definition of weak vertex-deletion is different than the vertex-deletion from graph theory.

\begin{theorem}\label{OHAdjacencyInterlacing} Let $G$ be an oriented hypergraph, and let $v$ be some vertex of $G$. Then
\[\lambda_{k+1}(A(G)) \leq \lambda_{k}(A(G\backslash v)) \leq \lambda_{k}(A(G))\text{ for all }k \in \{1,\ldots,n-1\}.\]
\end{theorem}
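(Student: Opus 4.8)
The plan is to realize $A(G\backslash v)$ as a principal submatrix of $A(G)$ and then invoke the Cauchy Interlacing Theorem (Lemma \ref{interlacinglemma}) with $r=n-1$. Concretely, order the vertices so that $v=v_n$, and let $B:=A(G)_{n-1}$ be the $(n-1)\times(n-1)$ principal submatrix obtained by deleting the row and column indexed by $v_n$. Then for $k\in\{1,\dots,n-1\}$, Lemma \ref{interlacinglemma} gives $\lambda_{k+1}(A(G))=\lambda_{k+(n)-(n-1)}(A(G))\le\lambda_k(B)\le\lambda_k(A(G))$, which is exactly the claimed chain of inequalities once we show $B=A(G\backslash v)$.

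So the real content is the identity $A(G\backslash v)=A(G)_{n-1}$, i.e. that deleting $v$ weakly does not change any adjacency among the remaining vertices. First I would unwind the definition of weak vertex-deletion: $G\backslash v$ has underlying hypergraph $H\backslash v=(V\backslash\{v\},E_v,\mathcal{I}_v)$ with $E_v=\{e\cap(V\backslash\{v\}):e\in E\}$ and $\mathcal{I}_v=\mathcal{I}\cap((V\backslash\{v\})\times E_v)$, and incidence orientation the restriction of $\sigma$. Fix two distinct vertices $v_i,v_j\ne v$. I need to check that for each edge $e\in E$, the pair $v_i,v_j$ is adjacent via $e$ in $G$ with a given sign if and only if $v_i,v_j$ is adjacent via $e\cap(V\backslash\{v\})$ in $G\backslash v$ with the same sign. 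Since removing $v$ from an edge $e$ affects neither whether $v_i\in e$ nor whether $v_j\in e$ (as $v_i,v_j\ne v$), the incidences $(v_i,e),(v_j,e)\in\mathcal{I}$ persist as $(v_i,e\cap(V\backslash\{v\})),(v_j,e\cap(V\backslash\{v\}))\in\mathcal{I}_v$ with the same $\sigma$ values; hence $\sgn_{e\cap(V\backslash\{v\})}(v_i,v_j)=-\sigma(v_i,e)\sigma(v_j,e)=\sgn_e(v_i,v_j)$. Summing over $e$ (using that the map $e\mapsto e\cap(V\backslash\{v\})$ together with its incidence data matches up the terms) yields $(A(G\backslash v))_{ij}=\sum_e\sgn_e(v_i,v_j)=(A(G))_{ij}$, and the same holds for the $0$-entries. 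A small caveat to address: the map $e\mapsto e\cap(V\backslash\{v\})$ from $E$ to $E_v$ may not be injective, so strictly one should argue at the level of the multiset $\mathcal{I}_v$ of incidences rather than edges; but since the adjacency matrix entry is defined as a sum over $e\in E$ of signs of incidence-pairs, and each such pair of incidences in $G$ not involving $v$ corresponds bijectively to a pair of incidences in $\mathcal{I}_v$ with identical signs, the sums coincide regardless. (Alternatively, and perhaps more cleanly, one can note $H(G\backslash v)$ is obtained from $H(G)$ by deleting the row for $v$ — this essentially follows from the definitions — and then $A(G\backslash v)$ can be read off from $L(G\backslash v)=H(G\backslash v)H(G\backslash v)^{\mathrm T}$ via Lemma \ref{OHLapIncidenceRelation}, comparing off-diagonal entries.)

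The main obstacle is thus not any hard inequality — the interlacing lemma does all the spectral work — but rather the bookkeeping needed to verify $A(G\backslash v)=A(G)_{n-1}$ cleanly, namely confirming that weak vertex-deletion, despite collapsing edges, preserves exactly the adjacency structure on $V\backslash\{v\}$ together with all its signs. Once that identity is in hand, the proof is a one-line application of Lemma \ref{interlacinglemma}. I would write the argument by first stating and justifying the submatrix identity as the key step, then closing with the interlacing inequality specialized to $r=n-1$.
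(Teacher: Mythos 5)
Your proposal is correct and follows exactly the paper's route: identify $A(G\backslash v)$ as the principal submatrix of $A(G)$ obtained by deleting the row and column of $v$ (since weak vertex-deletion removes only the incidences, hence only the adjacencies, containing $v$), then apply Lemma \ref{interlacinglemma} with $r=n-1$. Your extra bookkeeping on the edge map $e\mapsto e\cap(V\backslash\{v\})$ is a more careful justification of the same key step the paper asserts in one line.
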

\begin{proof} 
In the weak vertex-deletion $G\backslash v$, the only incidences that are removed from $G$ are those that contain the vertex $v$.  So the only adjacencies that are removed in the weak vertex-deletion are those that contain $v$.  Thus, the adjacency matrix $A(G\backslash v)$ can be obtained from $A(G)$ by deleting both rows and columns corresponding to the vertex $v$.  This shows that $A(G\backslash v)$ is an $(n-1)\times (n-1)$ principle submatrix of the adjacency matrix $A(G)$.  The result follows from Lemma \ref{interlacinglemma}.
\end{proof}

%%%%%%%%%%%%%%%%%%%%
%%%%%%%%%%%%%%%%%%%%
\section{Laplacian Eigenvalues}\label{LapEigenvaluesOHLapEigenvaluesOH}
%%%%%%%%%%%%%%%%%%%%
%%%%%%%%%%%%%%%%%%%%

For an oriented hypergraph $G$, the Laplacian matrix $L(G)$ is symmetric by definition, and hence, has real eigenvalues.  Moreover, Lemma \ref{OHLapIncidenceRelation} says that $L(G)$ is positive semidefinite, and therefore, $L(G)$ has nonnegative eigevalues.

The next lemma implies that a vertex-switching class has a single Laplacian spectrum.  This is immediate from Lemma \ref{OHLAHSwitchingSimilarityTrans}.  Two oriented hypergraphs $G_1$ and $G_2$ are \emph{Laplacian cospectral} if the Laplacian matrices $L(G_1)$ and $L(G_2)$ have the same spectrum.  The following lemma also shows that vertex-switching is a method for producing Laplacian cospectral oriented hypergraphs.

\begin{lemma}\label{OHVertSwitchingClassLapSpectrum} Let $G_1$ and $G_2$ both be oriented hypergraphs.  If $G_1\sim G_2$, then $G_1$ and $G_2$ are Laplacian cospectral.
\end{lemma}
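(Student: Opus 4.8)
The plan is to derive this directly from Lemma \ref{OHLAHSwitchingSimilarityTrans}, just as the analogous adjacency statement (Lemma \ref{OHAdjEigenvaluesSWITCHIN}) follows from the same source. The point is that switching acts on the Laplacian by a similarity transformation, and similar matrices share a spectrum.

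First I would use the hypothesis $G_1 \sim G_2$: by definition of vertex-switching equivalence there is a vertex-switching function $\zeta:V\rightarrow\{+1,-1\}$ with $G_2 = G_1^{\zeta}$. Next I would invoke part (3) of Lemma \ref{OHLAHSwitchingSimilarityTrans} to write $L(G_2) = L(G_1^{\zeta}) = D(\zeta)^{\text{T}} L(G_1) D(\zeta)$. Then I would observe that $D(\zeta)$ is a diagonal $\pm1$ matrix, so $D(\zeta)^{\text{T}} = D(\zeta) = D(\zeta)^{-1}$; hence $L(G_2) = D(\zeta)^{-1} L(G_1) D(\zeta)$, which exhibits $L(G_1)$ and $L(G_2)$ as similar matrices. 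Finally, since similar matrices have the same characteristic polynomial and therefore the same spectrum (including multiplicities), $L(G_1)$ and $L(G_2)$ are Laplacian cospectral.

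There is essentially no obstacle here — the lemma is a one-line corollary of the cited structural result, and the only mild care needed is noting that $D(\zeta)$ is its own inverse so that the congruence in Lemma \ref{OHLAHSwitchingSimilarityTrans}(3) is genuinely a similarity. The proof would read roughly as follows.

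\begin{proof}
Since $G_1 \sim G_2$, there is a vertex-switching function $\zeta:V\rightarrow\{+1,-1\}$ with $G_2 = G_1^{\zeta}$. By part (3) of Lemma \ref{OHLAHSwitchingSimilarityTrans},
\[
L(G_2) = L(G_1^{\zeta}) = D(\zeta)^{\text{T}} L(G_1) D(\zeta).
\]
The matrix $D(\zeta) = \text{diag}(\zeta(v_1),\ldots,\zeta(v_n))$ is diagonal with entries in $\{+1,-1\}$, so $D(\zeta)^{\text{T}} = D(\zeta) = D(\zeta)^{-1}$. Hence $L(G_2) = D(\zeta)^{-1} L(G_1) D(\zeta)$, so $L(G_1)$ and $L(G_2)$ are similar and therefore have the same spectrum. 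Thus $G_1$ and $G_2$ are Laplacian cospectral.
\end{proof}
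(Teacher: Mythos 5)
Your proof is correct and follows exactly the route the paper intends: the paper states the lemma is immediate from Lemma \ref{OHLAHSwitchingSimilarityTrans}, and you have simply written out the one-line argument, correctly noting that $D(\zeta)$ is its own inverse so the congruence is a genuine similarity.
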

%=====

The two products $\Eta(G)\Eta(G)^{\text{T}}$ and $\Eta(G)^{\text{T}}\Eta(G)$ are matrices with the same \allowbreak nonzero eigenvalues. This means that an oriented hypergraph and its incidence dual have the same nonzero Laplacian eigenvalues.

\begin{corollary}\label{OHGandIncidenceDualLapSpectrum}
If $G$ is an oriented hypergraph, then $L(G)$ and $L(G^*)$ have the same nonzero eigenvalues.
\end{corollary}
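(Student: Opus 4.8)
The plan is to reduce the statement to the standard linear-algebra fact that for any rectangular matrix $M$, the products $MM^{\text{T}}$ and $M^{\text{T}}M$ share the same nonzero eigenvalues, with the same multiplicities. By Lemma \ref{OHLapIncidenceRelation} we have $L(G) = \Eta(G)\Eta(G)^{\text{T}}$ and $L(G^{*}) = \Eta(G)^{\text{T}}\Eta(G)$, so taking $M = \Eta(G) \in \mathbb{R}^{n\times m}$ will immediately give the corollary once the general fact is in hand.

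To establish the general fact, I would exhibit an explicit isomorphism between eigenspaces for a fixed nonzero eigenvalue $\lambda$. If $MM^{\text{T}}\mathbf{x} = \lambda\mathbf{x}$ with $\mathbf{x} \neq \mathbf{0}$, then $M^{\text{T}}\mathbf{x} \neq \mathbf{0}$ --- otherwise $\lambda\mathbf{x} = MM^{\text{T}}\mathbf{x} = \mathbf{0}$, contradicting $\lambda \neq 0$ --- and
\[ M^{\text{T}}M\,(M^{\text{T}}\mathbf{x}) = M^{\text{T}}\big(MM^{\text{T}}\mathbf{x}\big) = \lambda\,(M^{\text{T}}\mathbf{x}), \]
so $\mathbf{x} \mapsto M^{\text{T}}\mathbf{x}$ maps the $\lambda$-eigenspace of $MM^{\text{T}}$ into that of $M^{\text{T}}M$. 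This map is linear and injective on that eigenspace, since $M^{\text{T}}\mathbf{x} = \mathbf{0}$ forces $\mathbf{x} = \lambda^{-1}MM^{\text{T}}\mathbf{x} = \mathbf{0}$. The symmetric map $\mathbf{y}\mapsto M\mathbf{y}$ goes the other way, so the two eigenspaces have equal dimension; hence $\lambda$ occurs with the same multiplicity in both spectra. Since $\lambda \neq 0$ was arbitrary, $MM^{\text{T}}$ and $M^{\text{T}}M$ have the same nonzero eigenvalues with the same multiplicities.

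Applying this with $M = \Eta(G)$ and invoking Lemma \ref{OHLapIncidenceRelation} finishes the proof. Alternatively, one could bypass the eigenspace bookkeeping entirely and cite the determinant identity $\det(\lambda I_n - MM^{\text{T}}) = \lambda^{\,n-m}\det(\lambda I_m - M^{\text{T}}M)$, which shows directly that the characteristic polynomials of $L(G)$ and $L(G^{*})$ agree up to a power of $\lambda$. I do not anticipate a real obstacle here; the only point requiring a little care is the multiplicity count, i.e. checking that $\mathbf{x}\mapsto M^{\text{T}}\mathbf{x}$ is a genuine isomorphism of eigenspaces rather than merely a correspondence of eigenvalues, and this is handled by the injectivity observation above.
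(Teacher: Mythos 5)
Your proposal is correct and follows the same route as the paper: both apply Lemma \ref{OHLapIncidenceRelation} to write $L(G)=\Eta(G)\Eta(G)^{\text{T}}$ and $L(G^{*})=\Eta(G)^{\text{T}}\Eta(G)$ and then invoke the standard fact that $MM^{\text{T}}$ and $M^{\text{T}}M$ share the same nonzero eigenvalues. The only difference is that you supply a full proof of that standard fact, which the paper simply cites as known.
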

\begin{proof} This follows from Lemma \ref{OHLapIncidenceRelation}.
\end{proof}

If the number of vertices and edges are the same in an oriented hypergraph $G$ (i.e., $n=m$), it is impossible to distinguish $G$ and $G^*$ from their Laplacian spectra.  We have already seen that a vertex-switching class has a Laplacian spectrum (see Lemma \ref{OHVertSwitchingClassLapSpectrum}), but vertex-switching an oriented hypergraph $G$ will produce an oriented hypergraph that is usually very different from the incidence dual $G^*$.  Hence, Corollary \ref{OHGandIncidenceDualLapSpectrum} provides a potential method for producing a Laplacian cospectral oriented hypergraph that does not belong to the vertex-switching class of $G$.  In fact, it provides a potential method for producing two Laplacian cospectral vertex-switching classes.  That is, it may be that $[G]\neq[G^*]$, but $[G]$ and $[G^*]$ are Laplacian cospectral.

\begin{example}\label{OHGandDualLaplacianCospectralExample} Consider the oriented hypergraph $G$ and its incidence dual $G^*$ in Figure \ref{OHGandDualLaplacianCospectral}.  
\begin{figure}[h!]
    \includegraphics[scale=0.6]{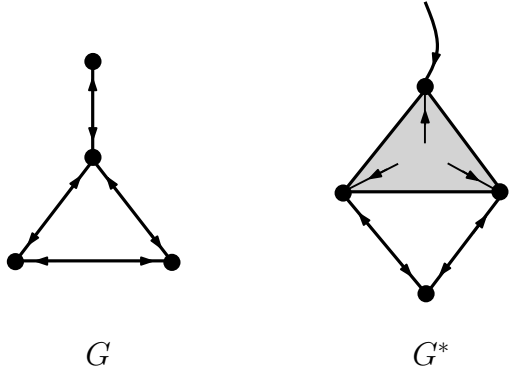}\centering
    \caption{An oriented hypergraph $G$ and its incidence dual $G^*$.}\label{OHGandDualLaplacianCospectral}
\end{figure}
The Laplacian matrices of $G$ and $G^*$ are
\begin{align*}
L(G)&=\Eta(G)\Eta(G)^{\text{T}} = \left[\begin{array}{rrrr}
2 & 1 & 1 & 0 \\
1 & 2 & 1 & 0 \\
1 & 1 & 3 & 1 \\
0 & 0 & 1 & 1
\end{array}\right], \\\intertext{and }
L(G^*)&=\Eta(G^*)\Eta(G^*)^{\text{T}} = \left[\begin{array}{rrrr}
2 & 1 & 1 & 1 \\
1 & 2 & 1 & 0 \\
1 & 1 & 2 & 1 \\
1 & 0 & 1 & 2
\end{array}\right].
\end{align*}
Both $L(G)$ and $L(G^*)$ have the same spectrum:
\[ \sigma(L(G))=\sigma(L(G^*))=\left\{ 1, 2, \frac{1}{2}(5-\sqrt{17}), \frac{1}{2}(5+\sqrt{17})\right\}.\]
Therefore, $G$ and $G^*$ are Laplacian cospectral.  Thus, we have produced Laplacian cospectral oriented hypergraphs which happen to be incidence duals, but are not in the same vertex-switching class since their underling hypergraphs are different.  \\

\noindent{\bf Question 1:} Are there other methods to produce Laplacian cospectral oriented hypergraphs other than vertex-switching or taking duals?  What if we only wanted the nonzero Laplacian eigenvalues of both oriented hypergraphs to be the same?
\end{example}

The following is a simplification of the quadratic form $\mathbf{x}^{\text{T}}L(G)\mathbf{x}$ for an oriented hypergraph $G$.

\begin{proposition}\label{OHLaplQuadFormProp}  Let $G=(H,\sigma)$ be an oriented hypergraph.  Suppose $\mathbf{x}=(x_1,x_2,\ldots,x_n)\in \mathbb{R}^n$.  Then
\[ \mathbf{x}^{\text{T}}L(G)\mathbf{x}=\mathbf{x}^{\text{T}}\Eta(G)\Eta(G)^{\text{T}}\mathbf{x} = \sum_{e \in E} \left(\sum_{v_k\in e} \sigma(v_k,e)x_k \right)^2.\] 
\end{proposition}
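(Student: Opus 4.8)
The plan is to expand the quadratic form $\mathbf{x}^{\text{T}}\Eta(G)\Eta(G)^{\text{T}}\mathbf{x}$ directly by recognizing it as the squared norm of the vector $\Eta(G)^{\text{T}}\mathbf{x}\in\mathbb{R}^m$. First I would invoke Lemma \ref{OHLapIncidenceRelation}, which already gives $L(G)=\Eta(G)\Eta(G)^{\text{T}}$, so that $\mathbf{x}^{\text{T}}L(G)\mathbf{x}=\mathbf{x}^{\text{T}}\Eta(G)\Eta(G)^{\text{T}}\mathbf{x}=\|\Eta(G)^{\text{T}}\mathbf{x}\|^2=\sum_{j=1}^m \big(\Eta(G)^{\text{T}}\mathbf{x}\big)_j^2$. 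This reduces everything to computing the $j^{\text{th}}$ entry of $\Eta(G)^{\text{T}}\mathbf{x}$, i.e. the inner product of the $j^{\text{th}}$ column of $\Eta(G)$ with $\mathbf{x}$.

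Next I would unwind the definition of the incidence matrix: the $(k,j)$ entry $\eta_{kj}$ equals $\sigma(v_k,e_j)$ when $(v_k,e_j)\in\mathcal{I}$ and $0$ otherwise. Hence
\[
\big(\Eta(G)^{\text{T}}\mathbf{x}\big)_j=\sum_{k=1}^n \eta_{kj}x_k=\sum_{\substack{k=1\\(v_k,e_j)\in\mathcal{I}}}^n \sigma(v_k,e_j)x_k=\sum_{v_k\in e_j}\sigma(v_k,e_j)x_k,
\]
where the last equality uses that $H$ is simple, so each vertex $v_k\in e_j$ is incident to $e_j$ exactly once, and that a vertex not in $e_j$ contributes a zero term. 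Squaring and summing over $j$ (equivalently over $e\in E$) then yields the claimed identity
\[
\mathbf{x}^{\text{T}}L(G)\mathbf{x}=\sum_{e\in E}\left(\sum_{v_k\in e}\sigma(v_k,e)x_k\right)^2.
\]

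There is no real obstacle here; the only point requiring a little care is the bookkeeping in the step that converts the sum over incident vertices into a sum over all vertices $v_k\in e$, which is exactly where the simplicity hypothesis on $H$ (stated as a standing assumption in the background section) is used to rule out repeated incidences that would otherwise inflate the coefficient. The argument is otherwise a routine manipulation of the factorization $L(G)=\Eta(G)\Eta(G)^{\text{T}}$ from Lemma \ref{OHLapIncidenceRelation}.
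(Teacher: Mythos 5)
Your proposal is correct and follows essentially the same route as the paper: both expand $\mathbf{x}^{\text{T}}\Eta(G)\Eta(G)^{\text{T}}\mathbf{x}$ as the sum of squares of the entries of $\Eta(G)^{\text{T}}\mathbf{x}$ and then unwind the definition of the incidence matrix entry by entry. Your explicit remark about where simplicity is used is a minor elaboration the paper leaves implicit, but the argument is the same.
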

{\em Proof.}  Let $\mathbf{x}=(x_1,x_2,\ldots,x_n)\in \mathbb{R}^n$. Then
\[(\mathbf{x}^{\text{T}}\Eta(G))^{\text{T}}=\Eta(G)^{\text{T}}\mathbf{x} = \left(\sum_{k=1}^n \eta_{v_ke_1}x_k, \cdots,\sum_{k=1}^n \eta_{v_ke_m}x_k\right). \]
Therefore,
\begin{align*}
\mathbf{x}^{\text{T}} L(G) \mathbf{x} = \mathbf{x}^{\text{T}} \Eta(G) \Eta(G)^{\text{T}} \mathbf{x}&= (\mathbf{x}^{\text{T}} \Eta(G))(\mathbf{x}^{\text{T}} \Eta(G))^{\text{T}} \\
&= \sum_{t=1}^m\left( \sum_{k=1}^n \eta_{v_k e_t} x_k \right)^2\\
&= \sum_{e\in E}\left( \sum_{v_k\in e} \sigma(v_k,e) x_k \right)^2.\cvd
\end{align*}

For a signed graph, the incidence matrix relation $\eta_{je}=-\eta_{ie}\sgn(e)$ provides a method for further simplification of the quadratic form in terms of edge signs.  Since there is no analogue of an edge sign for oriented hypergraphs, further simplification is difficult.

An edge in an oriented hypergraph is {\it uniformly oriented} if all incidences containing that edge have the same sign.  An oriented hypergraph is \emph{uniformly oriented} if all of its edges are uniformly oriented.  For example, all of the edges from both $G$ and $G^*$ in Figure \ref{OHGandDualLaplacianCospectral} are uniformly oriented, and thus, both $G$ and $G^*$ are uniformly oriented.  Notice that uniformly oriented hypergraphs do not need to have every incidence in the oriented hypergraph signed the same, as in Example \ref{OHGandDualLaplacianCospectralExample}.  Also, notice that the associated Laplacian matrices $L(G)$ and $L(G^*)$ in Example \ref{OHGandDualLaplacianCospectralExample} are nonnegative.

\begin{lemma}\label{nonnegLapOHReq} Let $G$ be a linear oriented hypergraph.  Then $L(G)$ is nonnegative if and only if all edges are uniformly oriented.
\end{lemma}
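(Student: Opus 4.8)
The plan is to argue entrywise, directly from $L(G)=D(G)-A(G)$. Since the diagonal of $L(G)$ consists of the vertex degrees $d_1,\dots,d_n\ge 0$, only the off-diagonal entries $-a_{ij}$ (with $i\ne j$) are in question. By the definition of the adjacency matrix together with $\sgn_e(v_i,v_j)=-\sigma(v_i,e)\sigma(v_j,e)$, one gets for $i\ne j$
\[ -a_{ij}=\sum_{\substack{e\in E\\ v_i,v_j\in e}}\sigma(v_i,e)\,\sigma(v_j,e), \]
and when $v_i,v_j$ are non-adjacent this sum is empty, so $-a_{ij}=0$. Thus the whole statement is a question about these off-diagonal sums.

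Next I would invoke linearity. If $G$ is linear then any two edges meet in at most one vertex, so the pair $\{v_i,v_j\}$ lies in at most one edge; consequently the displayed sum has at most one term and $-a_{ij}\in\{-1,0,+1\}$. Hence $L(G)$ is nonnegative if and only if for every adjacent pair $v_i,v_j$ the unique edge $e$ with $v_i,v_j\in e$ satisfies $\sigma(v_i,e)\sigma(v_j,e)=+1$, that is, $\sigma(v_i,e)=\sigma(v_j,e)$.

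With this reduction both implications are short. For the ``if'' direction, suppose every edge is uniformly oriented; then $\sigma(v_i,e)=\sigma(v_j,e)$ for all $v_i,v_j\in e$, so each off-diagonal entry $-a_{ij}$ is $0$ or $+1$ and $L(G)$ is nonnegative. For the ``only if'' direction, fix an edge $e$; for any two vertices $v_i,v_j\in e$, linearity makes $e$ the only common edge of that pair, so nonnegativity of $L(G)$ forces $\sigma(v_i,e)=\sigma(v_j,e)$; since this holds for every pair of vertices in $e$, all incidences at $e$ carry a common sign, i.e.\ $e$ is uniformly oriented (edges of size $\le 1$ being trivially so). As $e$ was arbitrary, $G$ is uniformly oriented.

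The only real content is the use of linearity: it is precisely what guarantees $|a_{ij}|\le 1$ and thereby lets ``nonnegativity of $L(G)$'' be read off pair-by-pair as agreement of the two relevant incidence signs. Without linearity, cancellations among several common edges could leave $-a_{ij}$ nonnegative even when the individual incidence-sign products disagree, so the linearity hypothesis is exactly the right place to focus; beyond that I anticipate no further obstacle.
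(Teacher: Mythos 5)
Your proof is correct and follows essentially the same route as the paper's: both identify the off-diagonal entry $l_{ij}$ as a sum of incidence-sign products over common edges (you via $L=D-A$ and the adjacency-sign formula, the paper via $l_{ij}=\sum_e \eta_{ie}\eta_{je}$ from $L=\Eta(G)\Eta(G)^{\text{T}}$, which are the same quantity), use linearity to reduce that sum to at most one term, and conclude that nonnegativity is equivalent to sign agreement across every pair of incidences in each edge. No gap; your explicit remark about why linearity is indispensable (possible cancellation among multiple common edges) is a worthwhile observation the paper leaves implicit.
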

\begin{proof}
For a simple oriented hypergraph $G$ the $(i,j)$-entry of $L(G)$ can be written as $l_{ij}=\sum_{e\in E} \eta_{ie}\eta_{je}$, by Lemma \ref{OHLapIncidenceRelation}.  The linear assumption, that is, the assumption that no two adjacent vertices are incident to more than one common edge, restricts the sum 
$\sum_{e\in E} \eta_{ie}\eta_{je}$ to have at most one nonzero term.  Therefore, $l_{ij}$ is either 0 or is exactly $\eta_{ie}\eta_{je}$ for some edge $e$ incident to $v_i$ and $v_j$.  Now
\begin{align*}
\eta_{ie}\eta_{je}\geq 0 &\iff [\eta_{ie}\geq 0\text{ and }\eta_{je}\geq 0]\text{ or }[\eta_{ie}\leq 0\text{ and }\eta_{je}\leq 0].
\end{align*}
Since this statement must be true for all vertices incident to a fixed edge $e$ it follows that all incidences containing edge $e$ have the same sign (or are otherwise 0).  This ensures $L(G)$ is nonnegative if and only if all edges are uniformly oriented. 
\end{proof}

For an oriented hypergraph $G=(H,\sigma)$ let $\mathcal{U}(G)$ be the set of all uniformly oriented hypergraphs with the same underlying hypergraph $H$ as $G$.

Hou, Li and Pan showed that the Laplacian spectral radius of the all negative signed graph provides an upper bound on the Laplacian spectral radius of all signed graphs with the same underlying graph \cite{MR1950410} .  For readers familiar with the signless Laplacian, this is equivalent to saying that the signless Laplacian spectral radius of a graph $\Gamma$ provides an upper bound on the Laplacian spectral radius of all signed graphs with underlying graph $\Gamma$ .  The signed graph result generalizes the same result known for graphs \cite{Shu2002123,MR2401311}.  Here we state a generalization to oriented hypergraphs.  It turns out that for oriented hypergraphs, the analogous structure of the all negative signed graph is a uniformly oriented hypergraph.

\begin{theorem}\label{UnivLapUBOrientedHypergraphs} Let $G=(H,\sigma)$ be a linear oriented hypergraph. Then for every $U \in \mathcal{U}(G)$,  \[ \lambda_1(L(G))\leq \lambda_1(L(U)).\]
\end{theorem}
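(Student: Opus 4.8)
The plan is to combine the variational characterization of $\lambda_1$ with the edge-wise formula for the Laplacian quadratic form from Proposition \ref{OHLaplQuadFormProp}, glued together by a single triangle inequality. By Lemma \ref{RRThm} we have $\lambda_1(L(G))=\max_{\mathbf{x}^{\text{T}}\mathbf{x}=1}\mathbf{x}^{\text{T}}L(G)\mathbf{x}$, so I would fix a unit vector $\mathbf{x}=(x_1,\ldots,x_n)$ with $\mathbf{x}^{\text{T}}L(G)\mathbf{x}=\lambda_1(L(G))$ (an eigenvector for $\lambda_1$ works). Proposition \ref{OHLaplQuadFormProp} then gives $\lambda_1(L(G))=\sum_{e\in E}\big(\sum_{v_k\in e}\sigma(v_k,e)x_k\big)^2$.

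Next I would pass to the entrywise absolute value $|\mathbf{x}|:=(|x_1|,\ldots,|x_n|)$, which is again a unit vector since $\sum_k|x_k|^2=\sum_k x_k^2=1$. Applying the triangle inequality inside each edge term (and using $|\sigma(v_k,e)|=1$), then squaring both sides, which are nonnegative, yields
\[ \lambda_1(L(G))=\sum_{e\in E}\Big(\sum_{v_k\in e}\sigma(v_k,e)x_k\Big)^2 \le \sum_{e\in E}\Big(\sum_{v_k\in e}|x_k|\Big)^2. \]
The remaining task is to recognize the right-hand side as a Laplacian quadratic form of a uniformly oriented hypergraph. Given any $U\in\mathcal{U}(G)$, each edge $e$ has a single incidence sign $\epsilon_e\in\{+1,-1\}$ with $\sigma_U(v_k,e)=\epsilon_e$ for every $v_k\in e$; since $\epsilon_e^2=1$, Proposition \ref{OHLaplQuadFormProp} applied to $U$ gives $|\mathbf{x}|^{\text{T}}L(U)|\mathbf{x}|=\sum_{e\in E}\big(\sum_{v_k\in e}\epsilon_e|x_k|\big)^2=\sum_{e\in E}\big(\sum_{v_k\in e}|x_k|\big)^2$. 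Combining this with the displayed inequality and invoking Lemma \ref{RRThm} once more for $L(U)$ (legitimate since $|\mathbf{x}|$ is a unit vector) gives $\lambda_1(L(G))\le|\mathbf{x}|^{\text{T}}L(U)|\mathbf{x}|\le\lambda_1(L(U))$, which is the claim.

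I do not anticipate a genuine obstacle: the argument is a one-line triangle inequality sandwiched between two applications of Rayleigh--Ritz, and in fact it does not seem to use the linearity hypothesis, which is presumably kept only for consistency with the neighbouring results such as Lemma \ref{nonnegLapOHReq}. The only step that warrants care is the middle one, where one must verify that the squared triangle-inequality bound is \emph{exactly} the $U$-quadratic form; this is precisely where uniform orientation of $U$ is used, since it is what allows the common sign $\epsilon_e$ to be factored out of each edge sum and squared away, leaving $\sum_{v_k\in e}|x_k|$.
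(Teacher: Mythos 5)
Your proposal is correct and follows essentially the same route as the paper: a unit eigenvector for $\lambda_1(L(G))$, the edge-wise quadratic form from Proposition \ref{OHLaplQuadFormProp}, a triangle inequality to pass to $(|x_1|,\ldots,|x_n|)$, and the observation that the common sign $\alpha_e$ squares away so the bound is exactly the $U$-quadratic form, finished by Rayleigh--Ritz. Your side remark is also accurate --- the paper's own argument never invokes the linearity hypothesis either.
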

\begin{proof}
The use of the quadratic form is inspired by the signed graphic proof in \cite[Lemma 3.1]{MR1950410}.  Let $G=(H,\sigma_G)$ and let $U=(H,\sigma_U)$ for some $U \in \mathcal{U}(G)$.  Let $\mathbf{x}=(x_1,\ldots,x_n)\in\mathbb{R}^n$ be a unit eigenvector of $L(G)$ with corresponding eigenvalue $\lambda_1(L(G))$.  By Proposition \ref{OHLaplQuadFormProp}:
\[\lambda_1(L(G)) = \mathbf{x}^{\text{T}}L(G)\mathbf{x} = \sum_{e \in E} \left( \sum_{v_k\in e} \sigma_G(v_k,e)x_k \right)^2
\leq \sum_{e \in E} \left(\sum_{v_k\in e} |x_k| \right)^2. \]
Since $\mathbf{x}=(x_1,x_2,\ldots,x_n)$ is a unit vector, $\mathbf{y}=(|x_1|,|x_2|,\ldots,|x_n|)$ is also a unit vector.  Hence,
\[\sum_{e \in E} \left(\sum_{v_k\in e} |x_k| \right)^2 = \sum_{e \in E} \left(\sum_{v_k\in e} y_k \right)^2 \leq \max_{\mathbf{z}^{\text{T}}\mathbf{z}=1}\sum_{e \in E} \left(\sum_{v_k\in e} z_k \right)^2. \]
Since $U$ is uniformly oriented we may assume $\sigma_U(v_k,e)=\alpha_e \in \{+1,-1\}$ for all $v_k\in e$.  Now, by Proposition \ref{OHLaplQuadFormProp} and Lemma \ref{RRThm}:
\begin{align*} \lambda_1(L(U)) = \max_{\mathbf{z}^{\text{T}}\mathbf{z}=1} \mathbf{z}^{\text{T}}L(U)\mathbf{z} &= \max_{\mathbf{z}^{\text{T}}\mathbf{z}=1} \sum_{e \in E} \left( \sum_{v_k\in e} \sigma_U(v_k,e)z_k \right)^2\\
&= \max_{\mathbf{z}^{\text{T}}\mathbf{z}=1} \sum_{e \in E} \left( \sum_{v_k\in e} \alpha_e z_k \right)^2\\
&= \max_{\mathbf{z}^{\text{T}}\mathbf{z}=1} \sum_{e \in E} \alpha_e^2\cdot\left( \sum_{v_k\in e} z_k \right)^2\\
&= \max_{\mathbf{z}^{\text{T}}\mathbf{z}=1} \sum_{e \in E} 1\cdot\left( \sum_{v_k\in e} z_k \right)^2\\
&= \max_{\mathbf{z}^{\text{T}}\mathbf{z}=1} \sum_{e \in E}\left( \sum_{v_k\in e} z_k \right)^2.
\end{align*}
Therefore, $\lambda_1(L(G))\leq \lambda_1(L(U))$.
\end{proof}

\noindent {\bf Question 2:} When does equality hold in Theorem \ref{UnivLapUBOrientedHypergraphs}?  If Hou, Li and Pan's result for signed graphs further generalizes to oriented hypergraphs, then equality holds if and only if $G$ is connected and vertex-switching equivalent to $U$.\\

Just like the adjacency eigenvalues, the Laplacian eigenvalues of an oriented hypergraph can be related to underlying structural parameters.  The following result generalizes 
the same upper bound known for the Laplacian spectral radius of a graph and the signless Laplacian spectral radius of a graph \cite{MR2401311}.

\begin{theorem} Let $G$ be an oriented hypergraph.  Then
\[ \lambda_1(L(G))\leq \max_{i} \{ d_i +\NumAdj(v_i) \}.\]
\end{theorem}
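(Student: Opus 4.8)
The plan is to apply the Ger\v{s}gorin disc theorem (Lemma \ref{GDThm}) to $L(G)$ and bound the off-diagonal row sums by $\NumAdj(v_i)$. Recall that $L(G)=D(G)-A(G)$, so the diagonal entries are $l_{ii}=d_i$ and the off-diagonal entries are $l_{ij}=-a_{ij}=-\sum_{e\in E}\sgn_e(v_i,v_j)$, where the sum runs over the edges $e$ containing both $v_i$ and $v_j$ (an empty sum, hence $0$, when $v_i$ and $v_j$ are not adjacent). By Lemma \ref{GDThm}, every eigenvalue $\lambda$ of $L(G)$ lies in at least one Ger\v{s}gorin disc, so there is an index $i$ with
\[ |\lambda - d_i| \leq \sum_{\substack{j=1\\ j\neq i}}^n |l_{ij}| = \sum_{\substack{j=1\\ j\neq i}}^n |a_{ij}|, \]
and in particular $\lambda \leq d_i + \sum_{j\neq i}|a_{ij}|$.

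The key step is to show $\sum_{j\neq i}|a_{ij}|\leq \NumAdj(v_i)$. For each $j\neq i$ the triangle inequality gives
\[ |a_{ij}| = \Bigl|\sum_{\substack{e\in E\\ v_i,v_j\in e}} \sgn_e(v_i,v_j)\Bigr| \leq \bigl|\{e\in E : v_i,v_j\in e\}\bigr|, \]
and this last quantity is exactly the number of adjacencies of the form $(e,\{v_i,v_j\})$ in $\mathcal{A}(G)$. Summing over all $j\neq i$ then counts each adjacency containing $v_i$ precisely once, which gives $\sum_{j\neq i}|a_{ij}|\leq \NumAdj(v_i)$. Combining this with the previous display yields $\lambda \leq d_i+\NumAdj(v_i)\leq \max_k\{d_k+\NumAdj(v_k)\}$ for every eigenvalue $\lambda$ of $L(G)$; applying this to $\lambda_1(L(G))$ completes the proof.

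I do not expect a genuine obstacle here. The only point requiring a little care is the bookkeeping in the middle step: bounding $|a_{ij}|$ by the number of common edges of $v_i$ and $v_j$ and then summing over $j$ must produce \emph{exactly} $\NumAdj(v_i)$, which relies on the fact that $\mathcal{A}(G)$ is a set with no repeated adjacencies (as noted in the background section). As an alternative route one could use the Rayleigh--Ritz characterization (Lemma \ref{RRThm}): expanding $\mathbf{x}^{\text{T}}L(G)\mathbf{x}=\sum_i d_i x_i^2 - \sum_{i\neq j} a_{ij}x_ix_j$, applying $2|x_ix_j|\leq x_i^2+x_j^2$ to each cross term, and regrouping the coefficients of $x_i^2$ gives the same bound. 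I would present the Ger\v{s}gorin version, since it is shorter and mirrors the standard graph argument in \cite{MR2401311}.
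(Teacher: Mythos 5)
Your proposal is correct and follows essentially the same route as the paper: apply the Ger\v{s}gorin disc theorem to $L(G)$, note $l_{ii}=d_i$, and bound the off-diagonal row sum $\sum_{j\neq i}|l_{ij}|=\sum_{j\neq i}|a_{ij}|$ by $\NumAdj(v_i)$ via the triangle inequality. If anything, your writeup is slightly more careful than the paper's in spelling out why $\sum_{j\neq i}|a_{ij}|\leq\NumAdj(v_i)$ and in extracting $\lambda\leq d_i+R_i$ directly from the disc rather than juggling absolute values and invoking positive semidefiniteness.
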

\begin{proof} By Lemma \ref{GDThm}, it is clear that for every $i\in\{1,\ldots,n\}$,
\[ |\lambda_1(L(G))|-|l_{ii}|\leq \big||\lambda_1(L(G))|-|l_{ii}|\big|\leq |\lambda_1(L(G))-l_{ii}|\leq \sum_{\substack{j=1\\j\neq i}}^n |l_{ij}|.\]
Therefore, 
\[|\lambda_1(L(G))|\leq |l_{ii}|+\sum_{\substack{j=1\\j\neq i}}^n |l_{ij}|\leq \max_{i}\Big\{ |l_{ii}|+\sum_{\substack{j=1\\j\neq i}}^n |l_{ij}|\Big\}
\leq \max_{i} \{ d_i +\NumAdj(v_i) \}.\]
Since $L(G)$ is positive semidefinite, $|\lambda_1(L(G))|=\lambda_1(L(G))$, and the result follows.
\end{proof}

To obtain a relationship between the Laplacian eigenvalues of an oriented hypergraph $G$ and the weak vertex-deletion $G\backslash v$ we will use the effect of weak vertex-deletion on the incidence matrix.  The same is also done for weak edge-deletion. Rusnak uses these results in his thesis \cite{OrientedHypergraphsThesisRusnak}, but are not formally stated.

\begin{lemma}\label{OHWeakVertDelIncidence} Let $G$ be an oriented hypergraph.
\begin{enumerate}
\item For any vertex $v$, $\Eta(G\backslash v)$ can be obtained from $\Eta(G)$ by deleting the row of $\Eta(G)$ corresponding to vertex $v$.  
\item For any edge $e$, $\Eta(G\backslash e)$ can be obtained from $\Eta(G)$ by deleting the column of $\Eta(G)$ corresponding to edge $e$.   
\end{enumerate}
\end{lemma}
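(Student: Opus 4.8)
The statement only unwinds the definitions of the incidence matrix and of weak deletion, so the plan is to verify each part entrywise. First I would recall that $\Eta(G)=(\eta_{v_ie_j})$ has its rows indexed by $V$, its columns by $E$, and $\eta_{v_ie_j}=\sigma(v_i,e_j)$ when $(v_i,e_j)\in\mathcal{I}$ and $\eta_{v_ie_j}=0$ otherwise. The common theme of both parts is that a weak deletion destroys certain incidences, creates no new ones, and leaves the orientation of every surviving incidence untouched; so the surviving matrix entries are unchanged and only an index set shrinks by one.

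For part (1), the rows of $\Eta(G\backslash v)$ are indexed by $V\backslash\{v\}$, while its columns are indexed by $E_v=\{e\cap(V\backslash\{v\}):e\in E\}$, which, using simplicity, I would identify with $E$ via the correspondence $e\leftrightarrow e\cap(V\backslash\{v\})$ and keep labeled by $E$. Fixing $v_i\neq v$ and $e\in E$, the defining equation $\mathcal{I}_v=\mathcal{I}\cap\big((V\backslash\{v\})\times E_v\big)$ shows that $(v_i,e\cap(V\backslash\{v\}))\in\mathcal{I}_v$ if and only if $(v_i,e)\in\mathcal{I}$, and when it does, the restricted orientation gives $\sigma{\mid}_{\mathcal{I}(H\backslash v)}(v_i,e\cap(V\backslash\{v\}))=\sigma(v_i,e)$. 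Hence the $(v_i,e)$-entry of $\Eta(G\backslash v)$ equals the $(v_i,e)$-entry of $\Eta(G)$, and no row for $v$ appears; this is exactly $\Eta(G)$ with the $v$-row deleted.

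Part (2) is the same argument but shorter, since no edge is altered: the rows of $\Eta(G\backslash e)$ are still indexed by $V$ and the columns by $E\backslash\{e\}$. From $\mathcal{I}_e=\mathcal{I}\cap\big(V\times(E\backslash\{e\})\big)$, every incidence $(v_i,f)$ with $f\neq e$ survives with its orientation and no incidence containing $e$ survives, so the $(v_i,f)$-entry is unchanged for all $f\neq e$ and the column for $e$ is removed, which is precisely $\Eta(G)$ with the $e$-column deleted.

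The only place that requires any care — and hence the main obstacle, such as it is — is the bookkeeping in part (1): reconciling the slight type mismatch in the definition of $\mathcal{I}_v$, whose elements are pairs whose second coordinate lies in $E_v$ rather than in $E$, and making the edge correspondence $e\leftrightarrow e\cap(V\backslash\{v\})$ explicit so that ``the column of $\Eta(G)$ corresponding to edge $e$'' is unambiguous. Because edges of size zero are permitted, no nonemptiness caveat on $E_v$ is needed, as already noted in the background section, so nothing else obstructs the verification, and I expect the written proof to be brief.
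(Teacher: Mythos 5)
Your proof is correct and follows essentially the same route as the paper's: both arguments simply unwind the definitions of weak deletion and of the incidence matrix, observe that surviving incidences keep their orientations, and conclude that only the row (resp.\ column) for $v$ (resp.\ $e$) disappears. Your explicit handling of the identification $e\leftrightarrow e\cap(V\backslash\{v\})$ in part (1) is a small point of extra care that the paper's proof glosses over, but it does not change the substance of the argument.
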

\begin{proof}
For the proof of (1) recall that the weak vertex-deletion $G\backslash v$ will result in deleting $v$ from the vertex set, removing $v$ from every edge containing $v$, and deleting all incidences containing $v$.  Now by definition the incidence matrix $\Eta(G\backslash v)$ will have size $(|V|-1)\times |E|=(n-1)\times m$, and its entries are exactly the orientations assigned to the individual incidences of $G\backslash v$ or 0 otherwise.  The entries of $\Eta(G\backslash v)$ are identical to that of $\Eta(G)$, except that, since the weak vertex-deletion of $v$ removes all incidences of $G$ containing $v$, there is no row corresponding to $v$ in $\Eta(G\backslash v)$.  The result follows.

To prove (2) recall that the weak edge-deletion $G\backslash e$ will result in deleting $e$ from the edge set and removing all incidences containing $e$.  Now by definition the incidence matrix $\Eta(G\backslash e)$ will have size $|V|\times (|E|-1)=n\times (m-1)$, and its entries are exactly the orientations assigned to the individual incidences of $G\backslash e$ or 0 otherwise.  The entries of $\Eta(G\backslash e)$ are identical to that of $\Eta(G)$, except that, since the weak edge-deletion of $e$ removes all incidences of $G$ containing $e$, there is no column corresponding to $e$ in $\Eta(G\backslash e)$.  The result follows.
\end{proof}

Similar to the adjacency eigenvalue relationship presented in Theorem \ref{OHAdjacencyInterlacing}, the Laplacian eigenvalues of an oriented hypergraph $G$ bound the Laplacian eigenvalues of the weak vertex-deletion $G\backslash v$.  This Laplacian interlacing relationship is in some sense a generalization of the bounds known for the Laplacian eigenvalues of a graph $G$ and the Laplacian eigenvalues of the vertex-deleted graph $G\backslash v$ \cite{1176.05047}, but again, the weak vertex-deletion is not exactly the same as vertex-deletion.

\begin{theorem}\label{OHLapVertInterlacing} Let $G$ be an oriented hypergraph, and let $v$ be some vertex of $G$. Then
\[\lambda_{k+1}(L(G)) \leq \lambda_{k}(L(G\backslash v)) \leq \lambda_{k}(L(G))\text{ for all }k \in \{1,\ldots,n-1\}.\]
\end{theorem}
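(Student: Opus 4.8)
The plan is to reduce the statement to a single application of the Cauchy Interlacing Theorem (Lemma \ref{interlacinglemma}), exactly in the spirit of the adjacency version, Theorem \ref{OHAdjacencyInterlacing}. There, one observed directly that $A(G\backslash v)$ is the principal submatrix of $A(G)$ obtained by deleting the row and column indexed by $v$. The Laplacian analogue is slightly less transparent, because weak vertex-deletion not only removes the incidences at $v$ but also strips $v$ out of every edge; so the first task is to confirm that, after passing through the incidence matrix, this edge modification produces no change in the Laplacian beyond the deletion of one row and the corresponding column.

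Concretely, I would proceed as follows. First, use Lemma \ref{OHLapIncidenceRelation} to write $L(G)=\Eta(G)\Eta(G)^{\text{T}}$ and $L(G\backslash v)=\Eta(G\backslash v)\Eta(G\backslash v)^{\text{T}}$. Next, invoke Lemma \ref{OHWeakVertDelIncidence}(1): $\Eta(G\backslash v)$ is obtained from $\Eta(G)$ simply by deleting the row indexed by $v$, the columns being untouched since removing $v$ from an edge $e$ only deletes the single entry in position $(v,e)$ of the incidence matrix. Then, for any two vertices $v_a,v_b\neq v$,
\[ \big(\Eta(G\backslash v)\Eta(G\backslash v)^{\text{T}}\big)_{ab}=\sum_{e\in E}\eta_{ae}\eta_{be}=\big(\Eta(G)\Eta(G)^{\text{T}}\big)_{ab}=l_{ab}, \]
so $L(G\backslash v)$ is precisely the $(n-1)\times(n-1)$ principal submatrix of $L(G)$ obtained by deleting the row and column corresponding to $v$. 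Finally, apply Lemma \ref{interlacinglemma} with $r=n-1$: for every $k\in\{1,\ldots,n-1\}$ it yields $\lambda_{k+n-(n-1)}(L(G))\leq\lambda_k\big(L(G)_{n-1}\big)\leq\lambda_k(L(G))$, which is exactly $\lambda_{k+1}(L(G))\leq\lambda_k(L(G\backslash v))\leq\lambda_k(L(G))$.

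The only step requiring genuine care — and hence the ``main obstacle,'' such as it is — is the identification of $L(G\backslash v)$ as a principal submatrix of $L(G)$, i.e. verifying that weak vertex-deletion alters the incidence matrix only by removing one row; this is where Lemma \ref{OHWeakVertDelIncidence} does the real work, and once it is in hand the argument is a direct citation of the interlacing lemma, parallel to Theorem \ref{OHAdjacencyInterlacing}.
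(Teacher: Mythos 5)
Your proposal is correct and follows essentially the same route as the paper: identify $L(G\backslash v)=\Eta(G\backslash v)\Eta(G\backslash v)^{\text{T}}$ as the principal submatrix of $L(G)=\Eta(G)\Eta(G)^{\text{T}}$ via Lemma \ref{OHWeakVertDelIncidence}(1), then cite the Cauchy interlacing theorem. Your explicit entrywise verification that the product is the principal submatrix is a small elaboration the paper leaves implicit, but the argument is the same.
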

\begin{proof} From Lemma \ref{OHWeakVertDelIncidence}, $\Eta(G\backslash v)$ is obtained from $\Eta(G)$ by deleting the row of $\Eta(G)$ corresponding to vertex $v$.  Therefore, $\Eta(G\backslash v)\Eta(G\backslash v)^{\text{T}}$ is a principle submatrix of $\Eta(G)\Eta(G)^{\text{T}}$.  By Lemma \ref{OHLapIncidenceRelation}, $L(G\backslash v)=\Eta(G\backslash v)\Eta(G\backslash v)^{\text{T}}$ and $L(G)=\Eta(G)\Eta(G)^{\text{T}}$.  The result follows from Lemma \ref{interlacinglemma}.
\end{proof}

There is also a relationship between the Laplacian eigenvalues of an oriented hypergraph $G$ and the Laplacian eigenvalues of the weak edge-deletion $G\backslash e$.  This result generalizes the same result for the Laplacian eigenvalues of a graph \cite{Mohar91thelaplacian}, the signless Laplacian eigenvalues of a graph \cite{MR2401311} and the Laplacian eigenvalues of a signed graph \cite{MR1950410}.

\begin{theorem}\label{OHLapEDGEInterlacing} Let $G$ be an oriented hypergraph, and let $e$ be some edge of $G$. Then
\[\lambda_{k+1}(L(G)) \leq \lambda_{k}(L(G\backslash e)) \leq \lambda_{k}(L(G))\text{ for all }k \in \{1,\ldots,n-1\}.\]
\end{theorem}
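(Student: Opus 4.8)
The plan is to imitate the proof of Theorem~\ref{OHLapVertInterlacing}, but on the incidence-dual side, because weak edge-deletion removes a \emph{column} of the incidence matrix rather than a row. By Lemma~\ref{OHWeakVertDelIncidence}(2), $\Eta(G\backslash e)$ is $\Eta(G)$ with the column indexed by $e$ deleted, so $\Eta(G\backslash e)^{\text{T}}\Eta(G\backslash e)$ is precisely the principal submatrix of $\Eta(G)^{\text{T}}\Eta(G)$ obtained by deleting the row and the column indexed by $e$. By Lemma~\ref{OHLapIncidenceRelation}(2) these Gram matrices equal $L((G\backslash e)^{\ast})$ and $L(G^{\ast})$ respectively; thus $L((G\backslash e)^{\ast})$ is an $(m-1)\times(m-1)$ principal submatrix of the $m\times m$ matrix $L(G^{\ast})$, and Lemma~\ref{interlacinglemma} gives
\[
\lambda_{k+1}(L(G^{\ast})) \le \lambda_{k}(L((G\backslash e)^{\ast})) \le \lambda_{k}(L(G^{\ast})) \qquad\text{for }k\in\{1,\ldots,m-1\}.
\]

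Next I would transfer this inequality from the duals back to $G$ and $G\backslash e$ using Corollary~\ref{OHGandIncidenceDualLapSpectrum}, which says $L(G)$ and $L(G^{\ast})$ share the same nonzero eigenvalues, and likewise $L(G\backslash e)$ and $L((G\backslash e)^{\ast})$. Since all four Laplacians are positive semidefinite, each ordered eigenvalue list is its positive eigenvalues followed by zeros, so $\lambda_i(L(G))=\lambda_i(L(G^{\ast}))$ for all $i\le\min(n,m)$ and $\lambda_i(L(G\backslash e))=\lambda_i(L((G\backslash e)^{\ast}))$ for all $i\le\min(n,m-1)$. For $k\le m-1$ every index occurring in the target inequality falls in this safe range, so it follows directly from the displayed interlacing; for $m\le k\le n-1$ (possible only if $n>m$) the factorizations $L(G)=\Eta(G)\Eta(G)^{\text{T}}$ and $L(G\backslash e)=\Eta(G\backslash e)\Eta(G\backslash e)^{\text{T}}$ bound the ranks by $m$ and $m-1$, forcing $\lambda_{k+1}(L(G))=\lambda_{k}(L(G\backslash e))=0\le\lambda_k(L(G))$, so the inequality is trivial there.

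The only genuine obstacle is this last bookkeeping step: one must keep track of the index shift between the $n\times n$ matrices and their $m\times m$ duals and of where the zero eigenvalues sit; after that the result is immediate. One could sidestep the duals altogether by observing that $L(G)=L(G\backslash e)+\mathbf{c}\mathbf{c}^{\text{T}}$, where $\mathbf{c}$ is the column of $\Eta(G)$ indexed by $e$, and appealing to the standard interlacing inequalities for rank-one positive-semidefinite perturbations---but those are not among the matrix-analysis facts recorded earlier in the paper, whereas the dual route uses only lemmas already available.
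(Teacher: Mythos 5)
Your proposal is correct and follows essentially the same route as the paper's own proof: delete the column of $\Eta(G)$ indexed by $e$, observe that $\Eta(G\backslash e)^{\text{T}}\Eta(G\backslash e)$ is a principal submatrix of $\Eta(G)^{\text{T}}\Eta(G)$, interlace there, and transfer back using the fact that $\Eta\Eta^{\text{T}}$ and $\Eta^{\text{T}}\Eta$ share their nonzero eigenvalues. The only difference is that you carefully carry out the index bookkeeping (the $n$ versus $m$ padding by zero eigenvalues) that the paper leaves implicit, which is a welcome level of detail rather than a deviation.
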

\begin{proof} The proof is the same as the signed graph proof \cite[Lemma 3.7]{MR1950410}.  From Lemma \ref{OHWeakVertDelIncidence}, $\Eta(G\backslash e)$ is obtained from $\Eta(G)$ by deleting the column of $\Eta(G)$ corresponding to edge $e$.  Therefore, $\Eta(G\backslash e)^{\text{T}}\Eta(G\backslash e)$ is a principle submatrix of\\ $\Eta(G)^{\text{T}}\Eta(G)$.  Also, both $\Eta(G)^T\Eta(G)$ and $\Eta(G)\Eta(G)^{\text{T}}$ have the same nonzero eigenvalues.  By Lemma \ref{OHLapIncidenceRelation}, $L(G\backslash e)=\Eta(G\backslash v)\Eta(G\backslash e)^{\text{T}}$ and $L(G)=\Eta(G)\Eta(G)^{\text{T}}$.  The result follows from Lemma \ref{interlacinglemma}.
\end{proof}

The relationship between the Laplacian eigenvalues of $G$,  $G\backslash v$ and $G\backslash e$ can be used to obtain Laplacian eigenvalue bounds.  The next theorem relates the largest Laplacian eigenvalue to the maximum degree of an oriented hypergraph.  This generalizes a signed graphic bound that appears in \cite{MR1950410}, which generalizes an unsigned graphic version in \cite[p.186]{MR2571608}. 

\begin{figure}[h!]
    \includegraphics[scale=0.65]{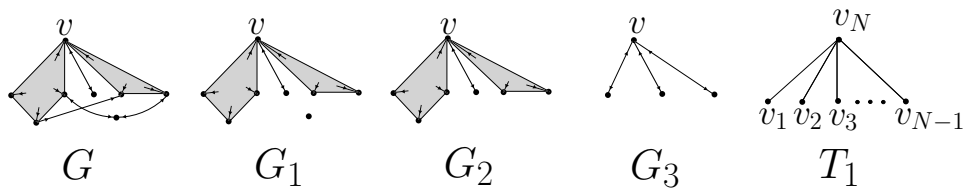}\centering
    \caption{An example of the deletion process described in the proof of Theorem \ref{OHLapLowerBoundDegree} of oriented hypergraphs $G$, $G_1$, $G_2$ and $G_3$ all with vertex $v$ having degree 3.  Also, the tree $T_1$ described in the same proof.}\label{ExampleDeletions}
\end{figure}

\begin{theorem}\label{OHLapLowerBoundDegree} Let $G$ be an oriented hypergraph where all edges have size at least 2.  Then
\[ \Delta+1 \leq \lambda_1(L(G)).\]
\end{theorem}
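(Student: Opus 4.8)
The plan is to strip $G$ down, by a sequence of weak edge- and vertex-deletions, to a star on $\Delta+1$ vertices whose Laplacian has $\Delta+1$ among its eigenvalues; since every weak deletion can only decrease the largest Laplacian eigenvalue (use the right-hand inequalities of Theorems~\ref{OHLapEDGEInterlacing} and~\ref{OHLapVertInterlacing} with $k=1$, iterated), this suffices. We assume $E\neq\emptyset$, so $\Delta\ge 1$.

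Fix a vertex $v$ with $\deg(v)=\Delta$ and let $e_1,\dots,e_\Delta$ be the edges incident to $v$. For the reduction to yield a clean star I need these edges to meet pairwise only in $v$, that is, to form a sunflower with core $\{v\}$. This is where linearity of $G$ enters, which I believe should be added to the hypotheses: for the non-linear $G$ on $V=\{v,a,b,c\}$ with edges $\{v,a,b\},\{v,b,c\},\{v,a,c\}$ one can choose $\sigma$ so that $\sigma(v,e_i)\sigma(w,e_i)$ cancels in pairs on each two-element intersection, giving $\sigma(L(G))=\{0,3,3,3\}$ and hence $\lambda_1(L(G))=3<\Delta+1$. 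Assuming $G$ linear, $e_i\cap e_j=\{v\}$ for $i\ne j$, so the sets $e_i\setminus\{v\}$ are nonempty (as $|e_i|\ge 2$) and pairwise disjoint; choose $u_i\in e_i\setminus\{v\}$, and note that $u_1,\dots,u_\Delta$ are distinct with $u_i\notin e_j$ whenever $j\ne i$. Now weak-edge-delete every edge other than $e_1,\dots,e_\Delta$, producing $G_1$, and then weak-vertex-delete every vertex other than $v,u_1,\dots,u_\Delta$, producing $T_1$ (these are the deletions illustrated in Figure~\ref{ExampleDeletions}). By Lemma~\ref{OHWeakVertDelIncidence}, $\Eta(T_1)$ is the submatrix of $\Eta(G)$ with rows indexed by $\{v,u_1,\dots,u_\Delta\}$ and columns by $\{e_1,\dots,e_\Delta\}$, and since $u_j\notin e_i$ for $j\ne i$ its $i$th column is supported exactly on $\{v,u_i\}$, with entries $\sigma(v,e_i)$ and $\sigma(u_i,e_i)$.

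By Lemma~\ref{OHLapIncidenceRelation}, $L(T_1)=\Eta(T_1)\Eta(T_1)^{\text{T}}$ has diagonal $(\Delta,1,\dots,1)$, has $(v,u_i)$-entry $\varepsilon_i:=\sigma(v,e_i)\sigma(u_i,e_i)\in\{-1,+1\}$, and has every other off-diagonal entry equal to $0$. The nonzero vector $\mathbf{w}$ with $w_v=\Delta$ and $w_{u_i}=\varepsilon_i$ then satisfies $L(T_1)\mathbf{w}=(\Delta+1)\mathbf{w}$, so $\Delta+1\in\sigma(L(T_1))$ and thus $\lambda_1(L(T_1))\ge\Delta+1$; chaining the interlacing inequalities along the two deletion sequences gives $\lambda_1(L(G))\ge\lambda_1(L(G_1))\ge\lambda_1(L(T_1))\ge\Delta+1$. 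The only real obstacle is the structural point above — recognizing that linearity is what forces the edges at a maximum-degree vertex into a sunflower so that a star can be extracted; the remaining steps are routine incidence-matrix bookkeeping. Alternatively one can avoid naming $T_1$: after reducing to $G_1$, apply Lemma~\ref{RRThm} to the $\Delta\times\Delta$ matrix $\Eta(G_1)^{\text{T}}\Eta(G_1)$ (which shares its nonzero eigenvalues with $L(G_1)$) using the test vector $(\sigma(v,e_1),\dots,\sigma(v,e_\Delta))$, whose Rayleigh quotient equals $\bigl(\Delta^2+\sum_{i=1}^{\Delta}(|e_i|-1)\bigr)/\Delta\ge\Delta+1$.
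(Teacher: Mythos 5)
Your argument follows the same route as the paper's: strip $G$ down by weak deletions to a star on $\Delta+1$ vertices whose Laplacian has $\Delta+1$ as an eigenvalue, then chain the interlacing inequalities of Theorems~\ref{OHLapVertInterlacing} and~\ref{OHLapEDGEInterlacing}. The paper reaches the star by deleting all edges not incident to a maximum-degree vertex $v$, then all isolated vertices, then $|e|-2$ degree-one vertices from each surviving edge $e$, and evaluates $\lambda_1(L(T_1))=\Delta+1$ by switching to the all-positive-adjacency case; you instead exhibit the eigenvector $\mathbf{w}$ directly. Both computations are fine, so up to bookkeeping the two proofs coincide.

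The substantive point is your linearity caveat, and you are right to raise it: the theorem as printed does not assume $G$ linear, but the paper's proof silently needs it. The step ``perform weak vertex-deletion on $|e|-2$ vertices of $e$ that have degree 1'' presupposes that, after restricting to the edges through $v$, every other vertex lies in only one of them --- i.e., that those edges pairwise meet only in $v$ --- which is exactly what linearity guarantees and what makes the outcome of the deletions the star $T_1$ rather than something smaller. Your counterexample checks out: with $e_1=\{v,a,b\}$, $e_2=\{v,b,c\}$, $e_3=\{v,a,c\}$, $\sigma(v,\cdot)\equiv+1$, $\sigma(a,e_1)=\sigma(b,e_1)=\sigma(c,e_2)=+1$ and $\sigma(a,e_3)=\sigma(b,e_2)=\sigma(c,e_3)=-1$, the columns of $\Eta(G)$ are pairwise orthogonal of norm $\sqrt{3}$, so $\Eta(G)^{\text{T}}\Eta(G)=3I_3$ and $L(G)$ has spectrum $\{0,3,3,3\}$, whereas $\Delta+1=4$. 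So the statement is false without an added hypothesis, and your version (with linearity) is the corrected one. Your closing Rayleigh-quotient alternative on $\Eta(G_1)^{\text{T}}\Eta(G_1)$ is also correct and slightly slicker, though note that it too uses linearity, to evaluate the off-diagonal entries as $\sigma(v,e_i)\sigma(v,e_j)$.
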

{\em Proof.}   The proof uses similar techniques to those of \cite[Theorem 3.10]{MR1950410}.
Let $v$ be a vertex in $G$ with $\text{deg}(v)=\Delta$.  See Figure \ref{ExampleDeletions} for a guiding example to the following general argument. Let $G_1$ be the oriented hypergraph obtained by weak edge-deletion of edges not incident to $v$ in $G$.  By repeated use of Lemma \ref{OHLapEDGEInterlacing}, $\lambda_1(L(G_1))\leq\lambda_1(L(G))$.  Let $G_2$ be the oriented hypergraph obtained by weak vertex-deletion of all isolated vertices in $G_1$.  By repeated use of Lemma \ref{OHLapVertInterlacing}, $\lambda_1(L(G_2))\leq\lambda_1(L(G_1))$.  For every edge $e$ of $G_2$ with $|e|\geq 3$, perform weak vertex-deletion on $|e|-2$ vertices of $e$ that have degree 1.  After all such weak vertex-deletions, pick one of the possible resulting oriented hypergraphs $G_3$.  By repeated use of Lemma \ref{OHLapVertInterlacing}, $\lambda_1(L(G_3))\leq\lambda_1(L(G_2))$.  Notice that $G_3$ is a 2-uniform oriented hypergraph.  The underlying (hyper)graph is the tree $T_1$ depicted in Figure \ref{ExampleDeletions} with $N=\Delta+1$.  By a simple calculation (see for example \cite[Lemma 5.6]{MR2900705}), $\lambda_1(L(T_1))=\Delta+1$.  It is clear that we can perform a vertex-switching on $G_3$ so that the adjacency signature is $+1$ on all adjacencies.  Since vertex-switching leaves the Laplacian eigenvalues unchanged by Lemma \ref{OHVertSwitchingClassLapSpectrum}, it is now clear that $\lambda_1(L(G_3))=\Delta+1$.  The result follows via the string of inequalities: \[\Delta+1=\lambda_1(L(G_3))\leq \lambda_1(L(G_2))\leq\lambda_1(L(G_1))\leq\lambda_1(L(G)).\cvd\]

Here we present Laplacian eigenvalue bounds which actually depend on the adjacency signature. 

\begin{theorem}\label{TOHthm1} Let $G=(H,\sigma)$ be an oriented hypergraph.  Then
\begin{equation}\label{OHNeqLB1gain}
\lambda_{n}(L(G)) \leq \frac{1}{n} \sum_{j=1}^n \big(d_j- \NumAdj^{\pm}(v_j)\big)  \leq \lambda_{1}(L(G)).
\end{equation}
\end{theorem}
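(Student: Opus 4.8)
The plan is to mimic the proof of Theorem \ref{OHAdjacencyBounds1}, applying the Rayleigh--Ritz theorem (Lemma \ref{RRThm}) to the all-ones vector $\mathbf{j}=(1,\ldots,1)\in\mathbb{R}^n$. Since $L(G)$ is symmetric, Lemma \ref{RRThm} gives immediately
\[
\lambda_n(L(G)) \leq \frac{\mathbf{j}^{\text{T}} L(G)\mathbf{j}}{\mathbf{j}^{\text{T}}\mathbf{j}} \leq \lambda_1(L(G)),
\]
and since $\mathbf{j}^{\text{T}}\mathbf{j}=n$, the whole statement reduces to computing $\mathbf{j}^{\text{T}} L(G)\mathbf{j}$ and showing it equals $\sum_{j=1}^n \big(d_j - \NumAdj^{\pm}(v_j)\big)$.

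For that computation I would use $L(G)=D(G)-A(G)$, so that $\mathbf{j}^{\text{T}} L(G)\mathbf{j} = \mathbf{j}^{\text{T}} D(G)\mathbf{j} - \mathbf{j}^{\text{T}} A(G)\mathbf{j}$. The first term is $\sum_{j=1}^n d_j$ directly from the definition of the degree matrix. For the second term, I can reuse equation \eqref{OHnetdegeq} from the proof of Theorem \ref{OHAdjacencyBounds1}, which states $A(G)\mathbf{j} = (\NumAdj^{\pm}(v_1),\ldots,\NumAdj^{\pm}(v_n))$; hence $\mathbf{j}^{\text{T}} A(G)\mathbf{j} = \sum_{j=1}^n \NumAdj^{\pm}(v_j)$. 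Combining, $\mathbf{j}^{\text{T}} L(G)\mathbf{j} = \sum_{j=1}^n \big(d_j - \NumAdj^{\pm}(v_j)\big)$, which is exactly what is needed. Alternatively one could use the quadratic-form expression from Proposition \ref{OHLaplQuadFormProp}, writing $\mathbf{j}^{\text{T}} L(G)\mathbf{j} = \sum_{e\in E}\big(\sum_{v_k\in e}\sigma(v_k,e)\big)^2$, and then expanding the square; but routing through $D(G)-A(G)$ and the already-established \eqref{OHnetdegeq} is cleaner and avoids re-deriving the identity $\sum_e (\sum_{v_k\in e}\sigma(v_k,e))^2 = \sum_j (d_j - \NumAdj^{\pm}(v_j))$.

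There is essentially no obstacle here: the result is a one-line application of Rayleigh--Ritz plus a bookkeeping identity, and both ingredients are already available in the excerpt. The only mild care needed is to state clearly that $\mathbf{j}\neq\mathbf{0}$ so Lemma \ref{RRThm} applies, and to invoke \eqref{OHnetdegeq} explicitly rather than re-proving it. I would write the proof in three short sentences: set $\mathbf{j}$, apply Lemma \ref{RRThm} and divide by $n$, then compute $\mathbf{j}^{\text{T}} L(G)\mathbf{j}$ via $D(G)-A(G)$ and \eqref{OHnetdegeq}.
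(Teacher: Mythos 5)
Your proposal is correct and matches the paper's own argument: the paper likewise applies the Rayleigh--Ritz bound to $\mathbf{j}$ (phrased via $N_1={\bf j}^{\text{T}} L(G){\bf j}$) and computes ${\bf j}^{\text{T}}(D(G)-A(G)){\bf j}$ using equation \eqref{OHnetdegeq}. No gaps.
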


{\em Proof.} 
The proof method is similar to \cite[Theorem 3.2.1]{MR2571608} and \cite[Theorem 8.1.25]{MR2571608} that was used for the adjacency eigenvalue bounds in Theorem \ref{OHAdjacencyBounds1}.  Let $\mathbf{j}:=(1,\ldots,1)\in\mathbb{R}^n$. Let $N_k:={\bf j}^{\text{T}} L(G)^k {\bf j}$.  From Lemma \ref{RRThm} the following is clear: 
\[ (\lambda_{n}(L(G)))^k \leq N_k/\mathbf{j}^{\text{T}}\mathbf{j} \leq (\lambda_{1}(L(G)))^k.  \]
Using Equation \eqref{OHnetdegeq} we will compute $N_1$; thus, making inequality \eqref{OHNeqLB1gain} true. 
\begin{align*}
N_1 = {\bf j}^{\text{T}} L(G) {\bf j}&={\bf j}^{\text{T}} (D(G)-A(G)) {\bf j} \\
&= {\bf j}^{\text{T}}\big((d_1,\ldots,d_n)-(\NumAdj^{\pm}(v_1),\ldots, \NumAdj^{\pm}(v_n))\big)\\
& = \sum_{j=1}^n \big(d_j- \NumAdj^{\pm}(v_j)\big).\cvd
\end{align*}

Better bounds can be found by computing $N_k$ for larger $k$ values.

%%%%%%%%%%%%%%%%%%%%%%%%%
\section{Hypergraph Spectra}\label{HypergraphSpectra}
%%%%%%%%%%%%%%%%%%%%%%%%%

A graph can be thought of as a signed graph with all edges labeled $+1$.  The oriented hypergraphic analogue of a signed graph with all edges labeled $+1$ is to have all adjacencies signed $+1$.  However, if the hypergraph has an edge of size greater than 2, there is no way to assign vertex-edge incidence labels (find $\sigma$) so that all adjacencies are signed $+1$.  Therefore, in general, there is no natural way to create an oriented hypergraph with all adjacencies signed $+1$.

However, it is possible to create a hypergraphic analogue of a signed graph with all edges signed $-1$.  To do this we need to assign vertex-edge incidences labellings (find $\sigma$) so that all adjacencies are signed $-1$.  This is accomplished if and only if all edges are uniformly oriented.  This is obvious since a $+1$ adjacency is formed when an edge is contained in two incidences that are oppositely signed.  Hence, a hypergraph $H$ can be thought of as an oriented hypergraph $G=(H,\sigma)$ where all edges are uniformly oriented.
All such uniformly oriented hypergraphs for a fixed $H$ produce the same adjacency and Laplacian matrices.  To further simplify things we can consider the two special cases where all edges are uniformly oriented the same way.  That is, not only do we require a uniformly oriented hypergraph, but one where every incidence is given the same sign.  In first case, all incidences of $H$ are assigned $+1$, so that all adjacencies are signed $-1$, producing the oriented hypergraph $+H=(H,+1)$.  In the second case, all incidences of $H$ are assigned $-1$, so that all adjacencies are signed $-1$, producing the oriented hypergraph $-H=(H,-1)$.  These two choices are the simplest possible orientations to pick and naturally define adjacency and Laplacian matrices.  

Therefore, to study hypergraph spectra one could use the following definitions.  The \emph{adjacency matrix of a hypergraph $H$} is defined as
\[ A(H) := A(H,+1) = A(H,-1).\]
The \emph{Laplacian matrix of a  hypergraph $H$} is defined as
\[ L(H) := L(H,+1) = L(H,-1).\]
These choices result in adjacency and Laplacian matrices that almost resemble the adjacency and Laplacian matrices developed by Rodr\'{i}guez \cite{MR1890984}.  However, since our adjacency entries will always be negative, our definition of the adjacency matrix is actually the negative of Rodr\'{i}guez's.  The Laplacian matrix can then be produced under this assumption.  For these special cases the results of  Rodr\'{i}guez  \cite{MR1890984} could naturally be generalized.

One advantage of these definitions for the adjacency and Laplacian matrices of a hypergraph is that there is no requirement for the hypergraph to be $k$-uniform, which has been the case for most hypergraph spectra definitions \cite{MR1235565,MR1405722,MR1325271,MR2842309}.  Another advantage is that these matrices are algebraically simpler to work with than hypermatrices, which provide an alternative version of hypergraph spectra for $k$-uniform hypergraphs \cite{MR2900714, extremalspectra1, nikiforov}.  Nikiforov states in \cite{nikiforov} that this version of hypergraph spectra ``is defined as a conditional maximum; thus, its usability in extremal problems is rooted in its very nature."  None of the bounds above involve extremal problems, but it would be interesting to see if these definitions could be used to solve such problems.  Cooper and Dutle's work \cite{MR2900714} covers a broad range of topics and includes structural bounds similar to the results above.  In particular, Theorems \ref{OHSpecRadiusandMaxDegree} and \ref{OHAdjacencyInterlacing} are related to Theorems 3.8 and 3.9 in \cite{MR2900714}, but the theorems presented here are valid for all hypergraphs (including oriented hypergraphs), and not just $k$-uniform hypergraphs.   Another advantage of the approach presented here is that the classic relationship between the incidence, adjacency and Laplacian matrices known for graphs and signed graphs is preserved to the hypergraph setting in Lemma \ref{OHLapIncidenceRelation}.

%%%%%%%%%%%%%%%%%%%%%%%%%
\section{Acknowledgments}
%%%%%%%%%%%%%%%%%%%%%%%%%
The author would like to thank the referee for the helpful comments for improving the quality of this paper.

%%%%%%%%%%%%%%%%%%%%%%%%%%%%%%%%%%%%%%%%%%%%%%%%%%%%%%%%%%%%%

%\bibliographystyle{elsart-num-sort}
%\bibliographystyle{abbrv}
%\bibliographystyle{amsplain2}
%\bibliography{mybib}

\begin{thebibliography}{10}

\bibitem{MR1235565}
Fan R.~K. Chung.
\newblock The {L}aplacian of a hypergraph.
\newblock In {\em Expanding graphs ({P}rinceton, {NJ}, 1992)}, volume~10 of
  {\em DIMACS Ser. Discrete Math. Theoret. Comput. Sci.}, pages 21--36. Amer.
  Math. Soc., Providence, RI, 1993.

\bibitem{MR0087952}
Lothar Collatz and Ulrich Sinogowitz.
\newblock Spektren endlicher {G}rafen.
\newblock {\em Abh. Math. Sem. Univ. Hamburg}, 21:63--77, 1957.

\bibitem{MR2900714}
Joshua Cooper and Aaron Dutle.
\newblock Spectra of uniform hypergraphs.
\newblock {\em Linear Algebra Appl.}, 436(9):3268--3292, 2012.

\bibitem{MR2571608}
Drago{\v{s}} Cvetkovi{\'c}, Peter Rowlinson, and Slobodan Simi{\'c}.
\newblock {\em An {I}ntroduction to the {T}heory of {G}raph {S}pectra},
  volume~75 of {\em London Mathematical Society Student Texts}.
\newblock Cambridge University Press, Cambridge, 2010.

\bibitem{MR2401311}
Drago{\v{s}} Cvetkovi{\'c}, Peter Rowlinson, and Slobodan~K. Simi{\'c}.
\newblock Eigenvalue bounds for the signless {L}aplacian.
\newblock {\em Publ. Inst. Math. (Beograd) (N.S.)}, 81(95):11--27, 2007.

\bibitem{MR1405722}
Keqin Feng and Wen-Ch'ing~Winnie Li.
\newblock Spectra of hypergraphs and applications.
\newblock {\em J. Number Theory}, 60(1):1--22, 1996.

\bibitem{MR1325271}
Joel Friedman and Avi Wigderson.
\newblock On the second eigenvalue of hypergraphs.
\newblock {\em Combinatorica}, 15(1):43--65, 1995.

\bibitem{1176.05047}
Frank~J. Hall, Kinnari Patel, and Michael Stewart.
\newblock Interlacing results on matrices associated with graphs.
\newblock {\em J. Combin. Math. Combin. Comput.}, 68:113--127, 2009.

\bibitem{MR0140441}
A.~J. Hoffman.
\newblock On the exceptional case in a characterization of the arcs of a
  complete graph.
\newblock {\em IBM J. Res. Develop.}, 4:487--496, 1960.

\bibitem{MR1084815}
Roger~A. Horn and Charles~R. Johnson.
\newblock {\em Matrix {A}nalysis}.
\newblock Cambridge University Press, Cambridge, 1990.
\newblock Corrected reprint of the 1985 original.

\bibitem{MR1950410}
Yaoping Hou, Jiongsheng Li, and Yongliang Pan.
\newblock On the {L}aplacian eigenvalues of signed graphs.
\newblock {\em Linear Multilinear Algebra}, 51(1):21--30, 2003.

\bibitem{extremalspectra1}
Peter Keevash, John Lenz, and Dheuv Mubayi.
\newblock Spectral extremal problems for hypergraphs.
\newblock preprint:
  \href{http://arxiv.org/pdf/1304.0050v1.pdf}{arXiv:1304.0050}.

\bibitem{liu2000matrices}
Bolian Liu and Hong-Jian Lai.
\newblock {\em Matrices in {C}ombinatorics and {G}raph {T}heory}, volume~3 of
  {\em Network Theory and Applications}.
\newblock Kluwer Academic Publishers, Dordrecht, 2000.
\newblock With a foreword by Richard A. Brualdi.

\bibitem{MR2842309}
Linyuan Lu and Xing Peng.
\newblock High-ordered random walks and generalized {L}aplacians on
  hypergraphs.
\newblock In {\em Algorithms and models for the web graph}, volume 6732 of {\em
  Lecture Notes in Comput. Sci.}, pages 14--25. Springer, Heidelberg, 2011.

\bibitem{Mohar91thelaplacian}
Bojan Mohar.
\newblock The {L}aplacian spectrum of graphs.
\newblock In {\em Graph theory, combinatorics, and applications. {V}ol.\ 2
  ({K}alamazoo, {MI}, 1988)}, Wiley-Intersci. Publ., pages 871--898. Wiley, New
  York, 1991.

\bibitem{nikiforov}
Vladimir Nikiforov.
\newblock An analytic theory of extremal hypergraph problems.
\newblock preprint:
  \href{http://arxiv.org/pdf/1305.1073v2.pdf}{arXiv:1305.1073}.

\bibitem{MR2900705}
Nathan Reff.
\newblock Spectral properties of complex unit gain graphs.
\newblock {\em Linear Algebra Appl.}, 436(9):3165--3176, 2012.

\bibitem{ReffRusnak1}
Nathan Reff and Lucas~J. Rusnak.
\newblock An oriented hypergraphic approach to algebraic graph theory.
\newblock {\em Linear Algebra Appl.}, 437(9):2262--2270, 2012.

\bibitem{MR1890984}
J.~A. Rodr{\'{\i}}guez.
\newblock On the {L}aplacian eigenvalues and metric parameters of hypergraphs.
\newblock {\em Linear Multilinear Algebra}, 50(1):1--14, 2002.

\bibitem{OrientedHypergraphsThesisRusnak}
Lucas~J. Rusnak.
\newblock {\em Oriented hypergraphs}.
\newblock ProQuest LLC, Ann Arbor, MI, 2010.
\newblock Thesis (Ph.D.)--State University of New York at Binghamton.

\bibitem{Shu2002123}
Jin-Long Shu, Yuan Hong, and Wen-Ren Kai.
\newblock A sharp upper bound on the largest eigenvalue of the {L}aplacian
  matrix of a graph.
\newblock {\em Linear Algebra Appl.}, 347:123--129, 2002.

\bibitem{MR1120422}
Thomas Zaslavsky.
\newblock Orientation of signed graphs.
\newblock {\em European J. Combin.}, 12(4):361--375, 1991.

\end{thebibliography}

\end{document}